\documentclass[12pt]{article}

\usepackage{amssymb}
\usepackage{amsfonts}
\usepackage{amsmath}
\usepackage{amsthm}
\usepackage{graphicx}
\usepackage[boxruled]{algorithm2e}

\def\Rs{{\mathbb{R}^3}}

\def\bs{{{\bigskip}}}
\def\o{{\omega}} 
 
\def\a{{\alpha}}
\def\b{{\beta}}
\def\g{{\gamma}}

\def\d{{\delta}}
\def\D{{\Delta}}
 
\def\q{{\theta}}

 %So can't use section symbol
\def\t{{\tau}}

\newtheorem{thm}{Theorem} %[section]
\newtheorem{lem}{Lemma} %number thms and lems separately
\newtheorem{prop}[thm]{Proposition}

\usepackage{xcolor}

%
%%https://tex.stackexchange.com/a/53981/33981:
%\newtheorem{innercustomthm}{Theorem}
%\newenvironment{customthm}[1]
%  {\renewcommand\theinnercustomthm{#1}\innercustomthm}
%  {\endinnercustomthm}
%%
%\newtheorem{innercustomlem}{Lemma}
%\newenvironment{customlem}[1]
%  {\renewcommand\theinnercustomlem{#1}\innercustomlem}
%  {\endinnercustomlem}
 %
\usepackage{enumitem}
%\begin{enumerate}[label={(\arabic*)}]
%\begin{enumerate}[label={(\alpha*)}]
\usepackage{url}
%Labels and References:
\newcommand{\figlab}[1]{\label{fig:#1}}
\newcommand{\seclab}[1]{\label{sec:#1}}

\newcommand{\lemlab}[1]{\label{lem:#1}}
\newcommand{\thmlab}[1]{\label{thm:#1}}
\newcommand{\figref}[1]{\ref{fig:#1}}
\newcommand{\secref}[1]{\ref{sec:#1}}

\newcommand{\lemref}[1]{\ref{lem:#1}}
\newcommand{\thmref}[1]{\ref{thm:#1}}
\newcommand{\hide}[1]{}
%
%\usepackage{bm} %boldmath
%\usepackage{centernot} %better \not

%----------------------------------------------------------------------------------------------
%----------------------------------------------------------------------------------------------

\title{Simple Closed Quasigeodesics\\ on Tetrahedra}

\author{
Joseph O'Rourke%
    \thanks{Department of Computer Science, Smith College, Northampton, MA
      01063, USA.
      \protect\url{jorourke@smith.edu}.}
\and
Costin V\^{i}lcu%
    \thanks{``Simion Stoilow'' Institute of Mathematics  
      of the Romanian Academy,
      P.O. Box 1-764,
      RO-014700 Bucharest, Romania.
    \protect\url{Costin.Vilcu@imar.ro}.
    }
}%author

\date{\today}

\begin{document}
\maketitle

\begin{abstract}
Pogorelov proved in $1949$ that every every convex polyhedron has at least three simple closed quasigeodesics.  %~\cite{p-qglcs-49},
Whereas a geodesic has exactly $\pi$ surface angle to either side
at each point, a \emph{quasigeodesic} 
has at most $\pi$ surface angle to either side at each point.
Pogorelov's existence proof did not suggest a way to identify the three 
quasigeodesics, and it is only recently that a finite algorithm has been proposed.

Here we identify three simple closed quasigeodesics on any tetrahedron:
at least one through %at most 
$1$ vertex, at least one through $2$ vertices, and at least
one through $3$ vertices.
The only exception is that isosceles tetrahedra 
have simple closed geodesics 
but do not have a $1$-vertex quasigeodesic.

We also identify an infinite class of tetrahedra 
that each have at least $34$ simple closed quasigeodesics.
\end{abstract}

%%%%%%%%%%%%%%%%%%%%%%%%%%%%%%%%%%%%%%%%%%

\section{Introduction}
\seclab{Introduction}
It is well-known that every convex polyhedron has at least three simple closed quasigeodesics~\cite{p-qglcs-49},
a counterpart to the Lusternik-Schnirelmann theorem that every smooth closed convex
surface has at least three simple closed geodesics.
Whereas a geodesic on a convex polyhedron has exactly $\pi$ surface angle to either side
at each point, a
\emph{quasigeodesic} 
has at most $\pi$ surface angle to either side of any point.
Unlike geodesics, quasigeodesics can pass through vertices.

As Pogorelov's result does not lead directly to an algorithm,
it was posed as an open problem to find a polynomial-time algorithm to construct
at least one simple closed quasigeodesic~\cite[Open Prob.~24.2]{do-gfalop-07}.
Even a finite algorithm was not known.
Recently there has been progress on this question~\cite{demaine2020finding},
and an exponential-time algorithm
has been developed~\cite{ChartierArnaud}.

In this paper we describe the three quasigeodesics guaranteed by Pogorelov, in the particular case of tetrahedra.

In~\cite{Reshaping} we conjectured that every convex polyhedron has
either a simple closed geodesic, or a simple closed quasigeodesic through
exactly one vertex. 
We proved this conjecture for doubly-covered convex polygons~\cite[Ch.~17]{Reshaping}.
Here we prove it for all tetrahedra.

\begin{thm}
\thmlab{Q123}
Every tetrahedron has 
a $2$-vertex quasigeodesic, a $3$-vertex quasigeodesic, and
a simple closed geodesic or a $1$-vertex simple closed quasigeodesic.
\end{thm}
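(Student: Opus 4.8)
The plan is to reduce each required curve to a short list of candidate shapes, translate the quasigeodesic condition into linear inequalities among the four vertex cone angles $\omega_A,\omega_B,\omega_C,\omega_D$, and then exploit the identity $\omega_A+\omega_B+\omega_C+\omega_D=4\pi$ (equivalently, total curvature $4\pi$) to guarantee that a feasible candidate always exists. Two facts drive everything. First, at a vertex $v$ a curve that turns there, splitting $\omega_v$ into angles $\theta$ and $\omega_v-\theta$, is locally quasigeodesic iff $\theta\le\pi$ and $\omega_v-\theta\le\pi$; in particular any passage through a vertex with $\omega_v\le\pi$ is automatically admissible, since the two sides already sum to $\omega_v\le\pi$. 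Second, for a simple closed curve that is geodesic except for corners at some vertices, Gauss--Bonnet forces the curvature enclosed on one side to equal $\sum_i\theta_i-(\#\text{corners}-2)\pi$, where the $\theta_i$ are the interior turn angles on that side; as the enclosed curvature is just $\sum(2\pi-\omega_v)$ over the enclosed vertices, this pins down the turn angles up to the freedom in how the curve runs. Call a vertex \emph{fat} if $\omega_v>\pi$; then $0$ fat vertices means all $\omega_v=\pi$ (the isosceles case), and otherwise there is at least one fat and at least one strictly thin vertex.

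For the $3$-vertex quasigeodesic I would first try the \emph{edge triangle} of a face $F$, namely its three edges traversed as a closed curve through the three vertices of $F$. At a vertex $v\in F$ the side away from the opposite vertex has angle $\omega_v-\angle_F(v)$, so the edge triangle is quasigeodesic iff $\angle_F(v)\ge\omega_v-\pi$ at each $v\in F$, which is automatic at non-fat vertices. Hence if at most one vertex is fat, enclosing it (taking $F$ to be the opposite face) makes all three turn vertices non-fat and the edge triangle works. When two or three vertices are fat, at least one fat vertex must be a turn vertex, and I would cut its corner: replace the two incident edges near it by an interior geodesic arc. Developing the three faces around the enclosed vertex $D$ and using $\sum_i\theta_i=3\pi-\omega_D$ together with a continuity/intermediate value argument, I would show the turn angles can be brought simultaneously into the admissible window $[\omega_v-\pi,\pi]$.

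For the $2$-vertex quasigeodesic I would use a geodesic bigon through two chosen vertices that crosses one pair of opposite edges. To turn at $A$ and $C$, I would develop the two faces sharing the opposite edge $BD$ as a flat band, so the two arcs become straight segments crossing $BD$; Gauss--Bonnet then gives $\theta_A+\theta_C=2\pi-\omega_B$ when $B$ is the enclosed vertex. Running over the three ways to split the four vertices into a turning pair and a separated pair, the angle-sum constraint guarantees a choice for which both inequalities $\theta\le\pi$ and $\omega-\theta\le\pi$ hold at $A$ and at $C$, and the band development realizes the bigon as a simple closed quasigeodesic.

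The delicate disjunct is the simple closed geodesic or $1$-vertex quasigeodesic. In the isosceles case all $\omega_v=\pi$, and I would exhibit an honest simple closed geodesic directly by developing the surface into the plane, where it tiles periodically, and taking a straight closed curve missing all vertices (nothing needs to turn). In a non-isosceles tetrahedron some vertex $w$ is thin, $\omega_w<\pi$, and I would build a geodesic loop based at a neighbour $t$ that encircles $w$: cutting the three faces around $w$ along the edge $wt$ develops them into a planar fan of total angle $\omega_w<\pi$ with the two copies $t,t'$ of the turn vertex at its extremes, so the straight segment $tt'$ crosses the two intermediate edges and closes up through $t$ only. By Gauss--Bonnet the $w$-side turn angle at $t$ is $\pi-\omega_w$, reducing admissibility to $\pi\le\omega_w+\omega_t\le 2\pi$, and I would check that some neighbour $t$ of $w$ satisfies this using $\sum\omega=4\pi$. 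The main obstacle, and the step I expect to require the most care, is exactly this geometric existence claim: ensuring the developed straight segment genuinely crosses both intended edges (rather than escaping through an outer edge or passing on the far side of $w$), that the resulting curve is simple and meets exactly the intended vertices, and, in the flexible $2$- and $3$-vertex constructions, that the continuity argument can hit the prescribed turn angles at fat vertices. I would also handle with care the boundary configurations where some $\omega_v$ equals exactly $\pi$, which interpolate between the generic and isosceles cases.
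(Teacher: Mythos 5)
There is a genuine and recurring error in your proposal: a confusion between the vertex curvature $\omega_v$ and the cone angle $2\pi-\omega_v$, and it reverses your admissibility conditions in exactly the ways that matter. At a vertex $v$ of face $F$, the angle on the side of $\partial F$ away from $F$ is $(2\pi-\omega_v)-\angle_F(v)$, not $\omega_v-\angle_F(v)$; hence $\partial F$ is quasigeodesic at $v$ iff $\angle_F(v)\ge \pi-\omega_v$, which is \emph{automatic at fat vertices and at risk precisely at thin ones} (this is the content of Lemma~\lemref{curvless1}: a face can only ``fail'' at a vertex of curvature $<\pi$). So your easy case is exactly backwards: with a single fat vertex $a$, taking $F$ to be the opposite face $bcd$ puts all three corners at thin vertices, each of which can fail, and indeed the paper exhibits a tetrahedron on which only one of the four face boundaries is a quasigeodesic. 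Moreover, the paper's proof of Theorem~\thmref{Q3} shows that curvature bookkeeping via $\sum\omega_v=4\pi$ does not suffice: the hardest configuration (Case~3b there) is only ruled out using the triangle inequality among the three face angles at a vertex (Lemma~\lemref{trineq}), a constraint entirely invisible to your linear conditions on the $\omega$'s, and your corner-cutting fallback (besides targeting the wrong vertices) would in any case drop the cut vertex from the curve, losing the $3$-vertex property. The same swap breaks your $Q_1$ construction: developing the three faces around a \emph{thin} vertex $w$ gives a planar fan of apex angle $2\pi-\omega_w>\pi$, not $\omega_w<\pi$, so the chord $tt'$ has no reason to cross the two intermediate edges or to avoid the other vertices. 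The paper's construction does the opposite: it encircles a vertex of curvature $>\pi$ (cone angle $<\pi$) and bases the loop at the \emph{nearest} neighbor, which is what forces the other two vertices outside the relevant circle so the segment lives on $T$; and when two or more vertices are fat, a single loop can violate the angle bound on its far side, which is why the paper needs the paired-loop argument (two loops bounding a vertex-free cylinder whose planar development is a quadrilateral, whose angle sum $2\pi$ forces one loop to be admissible). Your proposed test $\pi\le\omega_w+\omega_t\le 2\pi$ does not capture the far-side angle at $t$, which depends on the shape of the unfolding and not only on the curvatures.

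For $Q_2$ you correctly identify the existence step as the main obstacle, but that is where essentially all the content lies, and your proposal supplies no mechanism for it. The paper splits into: (i) the trivial degenerate case (two vertices of curvature $\ge\pi$ give a doubled edge as a degenerate quasigeodesic --- a case your bigon framework does not cover, since at such vertices a transversally crossing bigon may be impossible); and (ii) the pointed case, where the crux is a visibility lemma in the star-unfolding $S_a$ (Lemma~\lemref{Visibility}): at least one of $b,c,d$ sees its opposite image of $a$, proved by deriving three strict inequalities of the form $2\gamma>\pi-\beta$, etc., whose sum yields $2\pi>2\pi$, together with the cut-locus structure of Lemma~\lemref{CutLocusTetra}. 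Gauss--Bonnet fixes the turn angles of your hypothetical bigon but cannot certify that straight segments in any development stay inside the unfolded region, remain simple, and avoid the other two vertices; without an argument of the visibility type, the ``band development realizes the bigon'' step is an assertion, not a proof. In summary: the overall architecture (reduce to face boundaries, edge-based digons, and one-vertex loops, then use Gauss--Bonnet) matches the paper's, but the curvature/cone-angle reversal invalidates your easy cases for $Q_1$ and $Q_3$, and the three genuinely hard steps --- the visibility lemma for $Q_2$, the closest-vertex and paired-loop arguments for $Q_1$, and the triangle-inequality argument for Case~3b of $Q_3$ --- are exactly the ones your sketch leaves open.
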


Our result complements in some sense the work of Protasov~\cite{protasov2007closed}, which determines closed geodesics on simplices.

Ellipsoids are well-known examples of smooth surfaces which admit only three simple closed geodesics.
Our second result establishes that many tetrahedra have an unexpected wealth of simple closed quasigeodesics.

\begin{thm}
\thmlab{34}
There exists an open set ${\cal O}$ in the space of all tetrahedra, each element of which has at least $34$ simple closed quasigeodesics.
\end{thm}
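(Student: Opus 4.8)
The plan is to fix a single, highly symmetric base tetrahedron—most naturally the regular tetrahedron $T_0$—exhibit at least $34$ simple closed quasigeodesics on it, and then show that each of the chosen curves persists on every tetrahedron in a neighborhood of $T_0$, so that the neighborhood (intersected with the finitely many persistence conditions) is the desired open set ${\cal O}$. The structural feature to exploit is that every vertex of $T_0$ has cone angle exactly $\pi$ (the three $60^\circ$ face angles sum to $\pi$), so $T_0$ is a flat sphere with four cone points of angle $\pi$; equivalently it is the quotient of the flat hexagonal torus by the central involution $x \mapsto -x$. Under this identification geodesics lift to straight lines, which makes the enumeration of closed curves tractable, and vertex passage costs nothing: any arc entering and leaving a vertex splits its angle $\pi$ into two parts that are automatically each $\le \pi$.

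First I would enumerate simple closed quasigeodesics on $T_0$ by combinatorial type—the cyclic sequence of edges crossed together with the list of vertices visited—restricting attention to curves that pass through at least two vertices. For each type one checks, via the unfolding to the triangular lattice, that the curve is embedded and that at every visited vertex both side-angles are \emph{strictly} less than $\pi$. Using the order-$24$ symmetry group of $T_0$, each admissible type spreads into an orbit of several congruent curves; summing the orbit sizes over the types through $2$, $3$, and $4$ vertices is where the bound of $34$ is extracted. Curves through a single vertex are deliberately excluded: as the statement about isosceles tetrahedra already signals, $T_0$ has none.

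The persistence step is the reason for restricting to vertex-passing curves. A simple closed quasigeodesic that visits $k \ge 2$ vertices decomposes into geodesic arcs, each of which joins two of the fixed vertices; such an arc is the unique geodesic of its combinatorial type between two prescribed points, so the whole curve is \emph{rigid}—it carries no free parameter and therefore deforms to a unique nearby curve of the same type as the tetrahedron varies. The $2$- and $3$-vertex quasigeodesics furnished by \thmref{Q123} are already instances of this rigid type. Because shortest-path structure and surface angles depend continuously on the tetrahedron, and because the two side-angles at each visited vertex were arranged to lie strictly below $\pi$ at $T_0$, these inequalities (and the transversality of the edge crossings) remain valid throughout a neighborhood of $T_0$; hence each of the $34$ curves survives as a genuine simple closed quasigeodesic there, and intersecting the finitely many open persistence conditions yields ${\cal O}$.

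The main obstacle is precisely the degeneracy of the base surface, and it dictates the whole design. On $T_0$ the simple closed \emph{geodesics} occur in continuous one-parameter families (parallel lines on the torus), so they are non-isolated and a generic perturbation destroys them; they can contribute nothing to a robust count. Likewise the cone angle $\pi$ places the quasigeodesic condition exactly at its boundary, so any curve making a $\pi$ turn at a vertex—for instance a segment retraced between two vertices—fails as soon as that vertex's cone angle rises above $\pi$. Both pitfalls are avoided by the rigidity-plus-strict-inequality selection above, but verifying that at least $34$ curves simultaneously meet all of these requirements—embedded, through $\ge 2$ vertices, with every turn strictly admissible—is the delicate combinatorial core of the argument.
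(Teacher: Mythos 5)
Your persistence framework is exactly the paper's (their Lemma~\lemref{Q_on-neighborhood}, a case of Lemma~17.2 of~\cite{Reshaping}: a simple closed quasigeodesic through $k\ge 1$ vertices with both side-angles \emph{strictly} below $\pi$ at every visited vertex survives on all nearby tetrahedra), but your choice of base point is where the argument breaks. The paper does \emph{not} take a neighborhood of the regular tetrahedron $T_0$: it first perturbs $T_0$ to a nearby tetrahedron $N$ by pushing the apex down, so that the three base vertices have curvature $>\pi$ and the apex has curvature $<\pi$, and only then takes ${\cal O}$ to be a neighborhood of $N$. Its count of $34$ on $N$ is $6\,(Q_1) + 21\,(Q_2) + 4\,(Q_3) + 3\,(Q_4)$, and two of these blocks are invisible from $T_0$: by Lemma~\lemref{isosceles} the regular tetrahedron has \emph{no} $1$-vertex quasigeodesic (the six $Q_1$'s exist only after the symmetry is broken, two through each base vertex of $N$, strict because the complete angle there is $<\pi$), and the doubled edges of $T_0$ have angle exactly $\pi$ on one side (the full cone angle), so they fail your strict-inequality persistence criterion --- indeed any perturbation pushing a cone angle above $\pi$ destroys the doubled edges at that vertex, whereas at $N$ the three doubled \emph{base} edges are strictly admissible and contribute $3$ of the $21$ $Q_2$'s. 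What survives at $T_0$ with strict inequalities, among the curves anyone has exhibited, is: $18$ nondegenerate edge-loops ($3$ per edge), $4$ face boundaries, and $3$ four-vertex ``equators'' (the Hamiltonian edge cycles bounding the face-pair partitions $AB{:}CD$, $AC{:}BD$, $AD{:}BC$) --- a total of $25$, nine short of $34$.

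Consequently your plan stands or falls on producing nine additional embedded, strictly admissible curves through $\ge 2$ vertices on $T_0$, and this is precisely the step you defer (``summing the orbit sizes \dots is where the bound of $34$ is extracted'') without carrying it out. It is not a routine verification: in the lattice unfolding one must check, type by type, that the union of two or more vertex-to-vertex arcs is embedded and avoids the other cone points, and it is not evident that $34$ such curves exist on $T_0$ at all; in particular the infinitely many degenerate limits of the geodesic families (doubled vertex-to-vertex segments) all fail strictness and contribute nothing. So as written there is a genuine gap at the combinatorial core. The fix is either to adopt the paper's device --- break the symmetry \emph{first}, verify strictness of the $6+21+4+3$ curves at $N$ directly (as the paper does via the star-unfoldings of $N$), and invoke the persistence lemma around $N$ --- or to actually execute the enumeration on $T_0$ and exhibit nine further admissible types, which your proposal does not do.
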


All our proofs are constructive and lead to algorithms, constant-time in an appropriate model of computation. See~\cite{demaine2020finding}
for a discussion of models of computation for quasigeodesics.

After presenting our proofs, we conclude the paper with a short section of
remarks and open questions.

%%%%%%%%%%%%%%%%%%%%%%%%%%%%%%%%%%%%%%%%%%%%%%%%%%

\subsection{Notation}
Here we list basic notation that we use throughout.
More specialized notation and preliminaries will be introduced where needed.

%see Fig.~\figref{NotationAngles}.
%%%%%%%%%%%%%%%%%%%%%%%%%%%%%%%%%Figure Begin
\begin{figure}[htbp]
\centering
\includegraphics[width=1.0\linewidth]{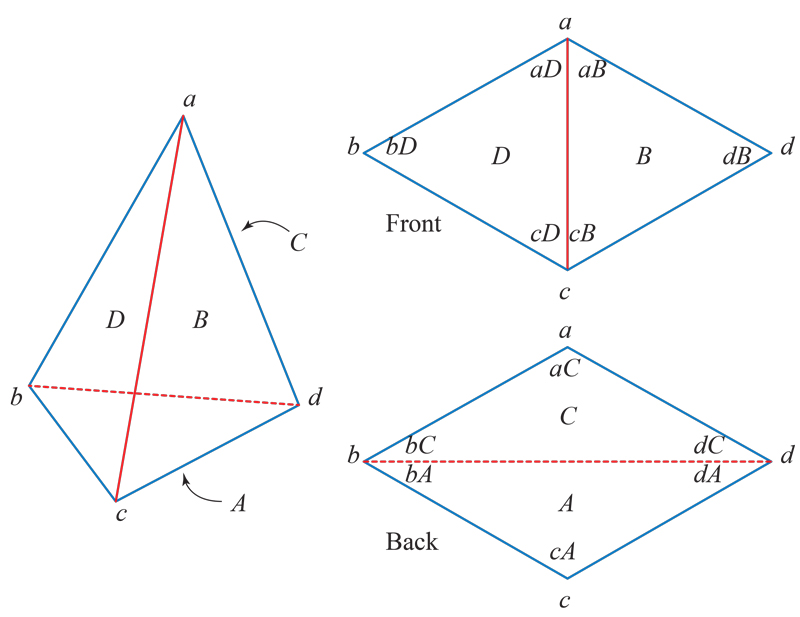}
\caption{$A=bdc$, $B=cda$, $C=adb$, $D=abc$.}
\figlab{NotationAngles}
\end{figure}
%%%%%%%%%%%%%%%%%%%%%%%%%%%%%%%%%Figure End
\begin{itemize}
\item Vertices of tetrahedron $T$: $a,b,c,d$.
\item Face $A$ is opposite $a$. 
So: $A=bdc$, $B=cda$, $C=adb$, $D=abc$. 
\item Face angles are specified by vertex and face.
So the three face angles incident to vertex $a$ are: $aB, aC, aD$; etc.
See Fig.~\figref{NotationAngles}.
\item Complete angle at $a$: $\q_a=aB +  aC + aD$.
\item Vertex curvature at $a$: $\omega_a = 2 \pi - (aB +  aC + aD)$.
\end{itemize}
For succinctness, we will often use the symbol $Q_k$ 
as shorthand for a ``$k$-vertex simple closed quasigeodesic."

%%%%%%%%%%%%%%%%%%%%%%%%%%%%%%%%%%%%%

\section{$Q_0$: Simple Closed Geodesics}
\seclab{Q0}
We use the Gauss-Bonnet theorem in 
two forms:
\begin{enumerate}
\item The total curvature at the four vertices sums to $4\pi$.
\item The turn $\t$ of a closed curve plus the curvature enclosed equals $2\pi$:
$\t+\o=2\pi$.
\end{enumerate}

By the first form of Gauss-Bonnet,
a simple closed geodesic $Q_0$ splits the vertex set of a convex polyhedron into two subsets, the total curvature of each being $2 \pi$.
Alexandrov~\cite[pp.~377-378]{a-digdk-55} 
observed that such a condition is 
uncommon among all convex polyhedra.
This fact was further refined by %P. 
Gruber~\cite{g-tcscc-91}
(as a preliminary step of his general result proof)
and by Gal'perin~\cite{gal2003convex} 
(for polyhedra homeomorphic to the sphere).
This led to a proof that,
for a fixed number of vertices, 
the set of convex polyhedra having a simple closed geodesic is closed and
has measure zero in the space of all convex polyhedra.

Particularizing to our framework, there is a special class of tetrahedra which do have many simple closed geodesics.
An \emph{isosceles tetrahedron}\footnote{Also called an \emph{isotetrahedron}, a \emph{tetramonohedron}, 
or an \emph{isohedral tetrahedron}.}
is a tetrahedron whose four vertices each have curvature $\pi$, 
or, equivalently, all four faces are congruent acute triangles.

It is a beautiful result that \emph{isosceles tetrahedra are the only convex surfaces that have arbitrarily long %\blue{and arbitrarily many}, 
simple closed geodesics}
\cite{protasov2007closed}, \cite{akopyan2018long}.
Consequently, they have infinitely
many such geodesics.
This wealth of $Q_0$'s is balanced 
in some sense by the non-existence of $Q_1$'s.

\begin{lem}
\lemlab{isosceles}
No isosceles tetrahedron has a $1$-vertex quasigeodesic.
\end{lem}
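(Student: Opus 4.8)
The plan is to argue by contradiction using the second form of Gauss--Bonnet, $\tau + \omega = 2\pi$. Suppose some isosceles tetrahedron carried a $Q_1$, i.e.\ a simple closed quasigeodesic $\gamma$ passing through exactly one vertex $v$. Since the tetrahedron is isosceles, every vertex has curvature $\omega = \pi$, so the complete angle at each vertex is $\theta = 2\pi - \pi = \pi$; in particular $\gamma$ is an ordinary geodesic away from $v$ and can turn only at $v$. Being a simple closed curve, $\gamma$ separates the surface into two disks $R_L$ and $R_R$, and at $v$ it splits the cone angle $\pi$ into the interior angle $\alpha_L$ of $R_L$ and the interior angle $\alpha_R$ of $R_R$, with $\alpha_L + \alpha_R = \theta = \pi$.

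First I would record the feasibility constraints forced by simplicity. Near $v$ each of the two arcs of $\gamma$ is a geodesic ray emanating from $v$; if one of the sectors had angle $0$, these two rays would coincide and the arcs would overlap in a neighborhood of $v$, contradicting simplicity. Hence a genuine simple passage through $v$ requires $0 < \alpha_L < \pi$ (and symmetrically $0 < \alpha_R < \pi$). This strict positivity on both sides is the geometric fact that the Gauss--Bonnet numerology will contradict.

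Next I would apply $\tau + \omega = 2\pi$ to one region, say $R_L$. The remaining three vertices $\{a,b,c,d\}\setminus\{v\}$ lie off $\gamma$, so each is interior to exactly one of $R_L, R_R$; let $k \in \{0,1,2,3\}$ of them lie in $R_L$. Since $\gamma$ is geodesic except at $v$, the only turning of $\partial R_L$ occurs at $v$, where the exterior angle (turn) is $\pi - \alpha_L$; and each enclosed vertex contributes curvature $\pi$, so $\omega = k\pi$. Thus $(\pi - \alpha_L) + k\pi = 2\pi$, giving $\alpha_L = (k-1)\pi$. The only values of $k$ yielding $0 \le \alpha_L \le \pi$ are $k=1$ (forcing $\alpha_L = 0$) and $k=2$ (forcing $\alpha_L = \pi$), both of which violate the strict inequality $0 < \alpha_L < \pi$ established above. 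This contradiction shows no $Q_1$ exists.

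The step I expect to be the main obstacle is not the curvature bookkeeping but the careful justification of the local picture at $v$: that a simple $Q_1$ must meet $v$ with strictly positive angle on each side, and that the single vertex on $\gamma$ accounts for all of the boundary turning as $\pi - \alpha_L$. Once this is nailed down, the relation $\alpha_L = (k-1)\pi$ closes the argument. It is worth emphasizing where ``isosceles'' is essential: it forces simultaneously that the enclosed curvature is an integer multiple of $\pi$ and that $\theta_v = \pi$, leaving no admissible split of the cone angle; for a general tetrahedron $\alpha_L$ can land strictly inside $(0,\theta_v)$, which is precisely why a $Q_1$ can then occur.
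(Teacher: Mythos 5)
Your Gauss--Bonnet bookkeeping is sound and matches the paper's: away from $v$ any quasigeodesic is an ordinary geodesic, the only turn of $\partial R_L$ is $\pi-\alpha_L$ at $v$, each enclosed vertex contributes curvature $\pi$, and so $\alpha_L=(k-1)\pi$, forcing $\alpha_L=0$ or $\alpha_R=0$. The gap is in how you dispatch the zero-angle case. You rule it out by asserting that a zero sector angle at $v$ makes the two arcs coincide near $v$ and thus ``contradicts simplicity.'' But under the conventions this paper works with, that is not a contradiction: degenerate doubled segments are explicitly admitted as simple closed quasigeodesics --- Section~\secref{Q2} counts doubled edges between two vertices of curvature $\ge\pi$ as ``degenerate simple closed quasigeodesics'' (and they are among the $34$ counted in Theorem~\thmref{34}), and Fig.~\figref{TetraNo1VertQuasi}, which accompanies this very lemma, exhibits the relevant $Q$ as a degenerate doubling of edge $dc$. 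So if you insist on strict injectivity to kill the case $\alpha_R=0$, you are proving the lemma under a definition of ``simple'' that is inconsistent with how quasigeodesics are counted elsewhere in the paper; under the paper's definition, the case $\alpha_R=0$ is genuinely possible and must be analyzed, not excluded.

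The missing step is exactly the paper's final move. If $\alpha_R=0$, the two arcs of $Q$ leave $v$ in the same direction and, both being geodesics, coincide until the curve folds back on itself. The fold point cannot be a smooth point of the surface, since there $Q$ would have angle $0$ on one side and $2\pi$ on the other, violating the quasigeodesic condition; hence the fold occurs at a vertex. Thus $Q$ is a doubled geodesic segment joining $v$ to a \emph{second} vertex (on an isosceles tetrahedron, a doubled edge such as $dc$), so it is a $2$-vertex quasigeodesic rather than a $1$-vertex one --- which is precisely the conclusion the lemma needs. With that replacement your argument is complete, and it is in one respect tidier than the paper's: by letting $k\in\{0,1,2,3\}$ range over all partitions you derive the $2$-to-$1$ split from Gauss--Bonnet rather than assuming it at the outset.
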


This lemma complements the remark in~\cite{davis2017geodesics}, that a regular tetrahedron has no geodesic loop.
See also~\cite{strantzen1992regular}
for a characterization of isosceles tetrahedra as the only tetrahedra having
three distinct minimal loops through any point on the face.

\begin{proof}
See Fig.~\figref{TetraNo1VertQuasi}.
Here we use the second form of Gauss-Bonnet.
Let $Q$ be a $1$-vertex quasigeodesic through vertex $d$,
with $a$ and $b$ strictly to $Q$'s left, and $c$ strictly to $Q$'s right.
Since $\o_a+\o_b=2\pi$, $Q$ must have no turn, $\t=0$, to its left at $d$,
and turn $\t=\pi$ to its right at $d$.
Having no turn to its left means the total angle of $\pi$ is to the left of $Q$ at $d$.
Turning $\pi$ to the right means that $Q$ turns around completely,
folding back on itself, which then forces $Q$ to 
contain vertex $c$.
Thus $c$ does not lie strictly to $Q$'s right:
$Q$ is a 
%degenerate 
$2$-vertex quasigeodesic,
not a $1$-vertex quasigeodesic.
See Fig.~\figref{TetraNo1VertQuasi} for an example.
\end{proof}
%see Fig.~\figref{TetraNo1VertQuasi}.
%%%%%%%%%%%%%%%%%%%%%%%%%%%%%%%%%Figure Begin
\begin{figure}[htbp]
\centering
\includegraphics[width=1.0\linewidth]{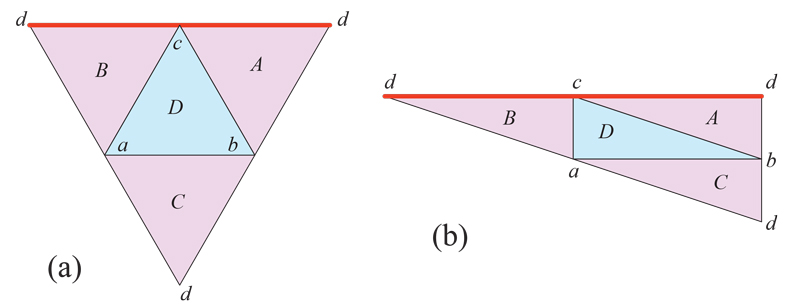}
\caption{Unfoldings of two isosceles tetrahedra. Quasigeodesic $Q$ (red) is a degenerate
doubling of edge $dc$.}
\figlab{TetraNo1VertQuasi}
\end{figure}
%%%%%%%%%%%%%%%%%%%%%%%%%%%%%%%%%Figure End

%%%%%%%%%%%%%%%%%%%%%%%%%%%%%%%%%%%%%

\section{$Q_1$: $1$-Vertex Quasigeodesics}
\seclab{Q1}

We have just seen in Section~\secref{Q0}
that isosceles tetrahedra have
no $1$-vertex quasigeodesic,
but do have simple closed geodesics.
The goal of this section is to prove this theorem:

\begin{thm}
\thmlab{Q1}
Every non-isosceles tetrahedron has 
at least one $1$-vertex simple closed quasigeodesic.
\end{thm}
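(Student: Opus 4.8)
The plan is to realize the desired $Q_1$ as a \emph{geodesic loop} based at a suitably chosen vertex: a simple closed curve that is a geodesic at every non-vertex point, passes through exactly one vertex $y$, and encircles exactly one other vertex. First I would record what the second form of Gauss--Bonnet forces. Denote the four vertices $x,y,z,w$, and suppose a simple loop based at $y$ separates the other three so that $x$ lies strictly on one side (the ``inside'') and $z,w$ lie strictly on the other. The only boundary turning occurs at $y$, so applying $\t+\o=2\pi$ to the inside disk gives inside angle $\o_x-\pi$, and applying it to the outside disk gives outside angle $\o_z+\o_w-\pi=3\pi-\o_x-\o_y$. (The split must be $2$--$1$: a $3$--$0$ split would demand a side of zero enclosed curvature but positive turn, which is impossible.) Hence such a loop is \emph{automatically} a quasigeodesic exactly when both side-angles lie in $[0,\pi]$, i.e. when
\[ \o_x>\pi \quad\text{and}\quad 2\pi\le \o_x+\o_y\le 3\pi. \]
The key feature is that these conditions involve only the curvatures, not the fine geometry of the loop.

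Next I would choose $x$ and $y$ by a curvature-budget count. Take $x$ to be a vertex of maximum curvature; since a non-isosceles tetrahedron does not have all four curvatures equal to $\pi$, and they sum to $4\pi$, this forces $\o_x>\pi$. It remains to find $y\neq x$ with $2\pi\le \o_x+\o_y\le 3\pi$. Writing the other three curvatures as $p\ge q\ge r$ with $p+q+r=S:=4\pi-\o_x\in(2\pi,3\pi)$ and $p\le \o_x$, the target is to land one of them in $[S-2\pi,\,S-\pi]$. One checks $p\ge S/3\ge S-2\pi$; if also $p\le S-\pi$ then the vertex realizing $p$ serves as $y$, while if $p>S-\pi$ then $q+r=S-p<\pi$ and $p\le \o_x=4\pi-S$ give $q\ge (q+r)/2\ge S-2\pi$ together with $q<\pi\le S-\pi$, so the vertex realizing $q$ serves as $y$. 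Thus a base vertex $y$ meeting both angle conditions always exists.

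With $x$ and $y$ fixed I would exhibit the loop explicitly. Cut the surface along edge $xy$ and unfold the three faces incident to $x$ into the plane; since $\q_x=2\pi-\o_x<\pi$, this produces a planar fan of apex angle $\q_x<\pi$ whose two extreme corners are the images $y,y'$ of the base vertex, each at distance $|xy|$ from $x$. The candidate $Q_1$ is the straight chord $yy'$, the base of an isosceles triangle with apex angle $\q_x$ at $x$; upon folding back, its inside angle at $y$ equals $\pi-\q_x=\o_x-\pi$, matching the Gauss--Bonnet value and certifying that the loop encircles $x$ alone. By the two preceding steps this chord then has both side-angles in $[0,\pi]$, hence is a quasigeodesic.

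The main obstacle is geometric realizability. The chord $yy'$ is an honest simple closed curve on the surface only if it stays inside the fan --- concretely, inside the pentagon $x\,y\,z\,w\,y'$ --- crossing the two intermediate edges $xz,xw$ at interior points and avoiding the vertices $z,w$. This can fail when a middle vertex sits close to $x$ (a short edge $xz$ or $xw$), for then the shortest loop around $x$ is pulled through that vertex, and the straight segment would instead enclose $\{x,z\}$ or run through $z$, violating the $2$--$1$ split. I expect the bulk of the work to lie here: showing that, after using the remaining freedom in the choice of $y$ among admissible vertices, one can always pick a base for which the chord lies in the fan and meets no second vertex, with the residual ``needle-like'' or ``flat'' configurations (two curvatures near $2\pi$) dispatched by a direct case analysis on the edge lengths. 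Once any such simple loop is produced, the angle bookkeeping of the first three steps guarantees it is a genuine $1$-vertex simple closed quasigeodesic.
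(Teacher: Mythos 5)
Your Gauss--Bonnet bookkeeping is correct (a geodesic loop at $y$ enclosing only $x$ has side angles exactly $\o_x-\pi$ and $3\pi-\o_x-\o_y$, so the quasigeodesic condition reduces to $\o_x>\pi$ and $\o_x+\o_y\ge 2\pi$), and your budget count showing that a curvature-admissible pair $(x,y)$ always exists is fine. But the argument has a genuine gap exactly where you say you ``expect the bulk of the work to lie'': nothing in the proposal shows that the chord $yy'$ stays inside the fan, i.e., that a geodesic loop at your chosen $y$ enclosing only $x$ actually exists on $T$. This is not a residual technicality to be dispatched by a case analysis on needle-like shapes---it is essentially the entire content of the paper's proof, and your curvature-first selection of $(x,y)$ is structurally misaligned with it. Realizability is governed by \emph{distances}: the paper bases its loop at the vertex closest to the enclosed vertex, so that the chord is a chord of a circle outside of which the remaining vertices lie. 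The vertex your budget count selects need not be closest. Worse, the enclosed vertex cannot always be taken to be the maximum-curvature vertex: suppose $\o_a\ge\o_b>\pi$ with $\o_b+\o_d>2\pi$ (hence $\o_a+\o_c<2\pi$), and with $c$ closest to $a$ and $d$ closest to $b$. For $x=a$ your admissible bases are $b$ and $d$, but loops at $b$ or $d$ around $a$ can be blocked by $c$; the loop at $c$ around $a$ does live on $T$ but fails your angle condition since $\o_a+\o_c<2\pi$. The loop that works encircles $b$, not the max-curvature vertex $a$, so ``remaining freedom in the choice of $y$'' does not suffice---you would need freedom in $x$ as well.

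The paper resolves this with machinery your outline lacks: a case analysis on which vertices are mutually closest (its Cases 1, 2.1, 2.2.1--2.2.3). When the distance-feasible and curvature-feasible choices disagree, it constructs \emph{two} loops, $Q_c$ around $a$ and $Q_d$ around $b$, and shows at least one satisfies the angle bound by cutting the vertex-free cylinder between them into a planar quadrilateral with angle sum $2\pi$ (in your bookkeeping: $(\o_a+\o_c)+(\o_b+\o_d)=4\pi$ forces at least one of the two sums to be $\ge 2\pi$). In the hardest subcase, where a single vertex $c$ is closest to both high-curvature vertices, it additionally needs a separate visibility argument that at least one of the two candidate loops based at $d$ lives on $T$ at all. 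So completing your proof would require replacing ``fix $x$ of maximum curvature, then choose $y$'' with a distance-driven case analysis and a paired-loop argument---essentially reconstructing the paper's proof. As a smaller point, matching the Gauss--Bonnet value $\o_x-\pi$ at $y$ does not by itself ``certify that the loop encircles $x$ alone''; that too presupposes the chord avoids $z$ and $w$, which is again the unproved realizability step.
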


In the remainder of this section, we assume all tetrahedra are not isosceles.

%%%%%%%%%%

\paragraph{Properties of $Q_1$.}
A quasigeodesic $Q_1$ through exactly one vertex $v$
on a tetrahedron $T$ must satisfy these conditions.
\begin{enumerate}[label={(\arabic*)}]
\item $Q$ must form a geodesic loop with loop point $v$.
A \emph{geodesic loop} is a simple closed 
curve which is a geodesic everywhere but at $v$.
\item To satisfy the Gauss-Bonnet theorem, $Q$ must partition the other three vertices two to one side and one to the other,
such that the total curvatures to each side of $Q$ are at most $2\pi$
(and not both sides equal to $2\pi$, for then there is no curvature at $v$ and it is not a vertex).
%
%\item The quasigeodesic criterion requires there to 
%be at most $\pi$ surface angle to either side of $Q$ at $v$
%(and not exactly $\pi$ to each side, for then it would be a geodesic).
\end{enumerate}
And of course the quasigeodesic angle criterion must be satisfied at $v$.

%%%%%%%%%%%%%%%%%%%%%%%%%%%%%%%%%%%%%%%%%%%%%%%%%%

\paragraph{Sketch of Proof for Theorem~\thmref{Q1}.}

The proof follows a case analysis based first on how many curvatures are greater
than $\pi$, and second on the distances from low-curvature vertices to
high-curvature vertices.
The curvatures greater than $\pi$ lead to convex vertices in unfoldings,
and which vertices are closest to these high-curvature vertices permits
concluding that particular geodesic loops are inside certain disks and so live on $T$.
Then the angles to either side at the geodesic loop vertex must be verified to be at most $\pi$
to conclude it is a quasigeodesic.

%%%%%%%%%%%%%%%%%%%%%%%%%%%%%%%%%%%%%%%%%%%%%%%%%%%%%%

\subsection{Case~1}
For Case~1, assume exactly one vertex has
curvature exceeding $\pi$: $\o_a > \pi$.
Let $d$ be the closest vertex to
$a$ among $b,c,d$.
Then 
star-unfold $T$ 
with respect to $d$, as illustrated in 
Fig.~\figref{Q1_Case1ab}:
Faces $C,D,B$ are incident to $a$,
and face $A$ is attached to face $D$ along edge $bc$.
Label the three images of $d$ as $d_1,d_2,d_3$ as illustrated.

We claim that $Q=d_1 d_2$ (red in the figure) is a simple closed quasigeodesic containing
just the vertex $d$. It will help to view $Q$ as directed from $d_1$ to $d_2$.
Note that, because $\o_a > \pi$, $\q_a < \pi$.
%Therefore, the pentagon $ad_1 b c d_2$ (the union of three triangle apexed at $a$)  is convex, hence it contains its diagonal $d_1d_2$.

First note that, because $|ad|$ is shorter or equal to $|ab|$ and $|ac|$,
$Q$ separates $b,c$ from $a$:
$Q$ is a chord of a circle of radius $|ad|$ centered on $a$, and
$b$ and $c$ lie on or outside that circle.

Next, $Q$ is a straight segment between two images
of vertex $d$ in this unfolding, and so a geodesic loop on $T$ including $d$.
It remains to show that the angle to either side of $Q$ at $d$ is $\le \pi$.

Let $\a_1$ and $\a_2$ be the angles of $\triangle d_1 d_2 a$ above $Q$, as illustrated
in the figures.
Then it is immediate that $\a_1 + \a_2 < \pi$.

Let $\b_1$ and $\b_2$ be the angles
$\angle d_2 d_1 b$ and $\angle d_1 d_2 c$ below $Q$, as illustrated.
The angle of $Q$ to the right side of $d$ we seek to bound is $\b_1 + \b_2 + dA$.
The reason angle $dA$ is to the right of $Q$ is
(a)~$dA$ is incident to vertex $d$, and
(b)~it is not part of $\a_1+\a_2$ to the left of $Q$.

Now note that the external angles at $b$ and $c$ in the unfolding are
$\o_b$ and $\o_c$ respectively.
Because $\o_b,\o_c < \pi$, the triangle $\triangle d_1 d_2 d_3$
includes face $A$ and so includes $b$ and $c$.
Therefore $\b_1+\b_2+dA$ must be smaller than $\pi$
because those three angles are each smaller than the 
corresponding angles of  $\triangle d_1 d_2 d_3$.

Therefore we have proved that the angle to the right of $Q$ at $a$ is less than $\pi$,
and so $Q$ is a simple closed quasigeodesic as claimed.

%see Fig.~\figref{Q1_Case1a}.
%%%%%%%%%%%%%%%%%%%%%%%%%%%%%%%%%Figure Begin
\begin{figure}[htbp]
\centering
\includegraphics[width=1.0\linewidth]{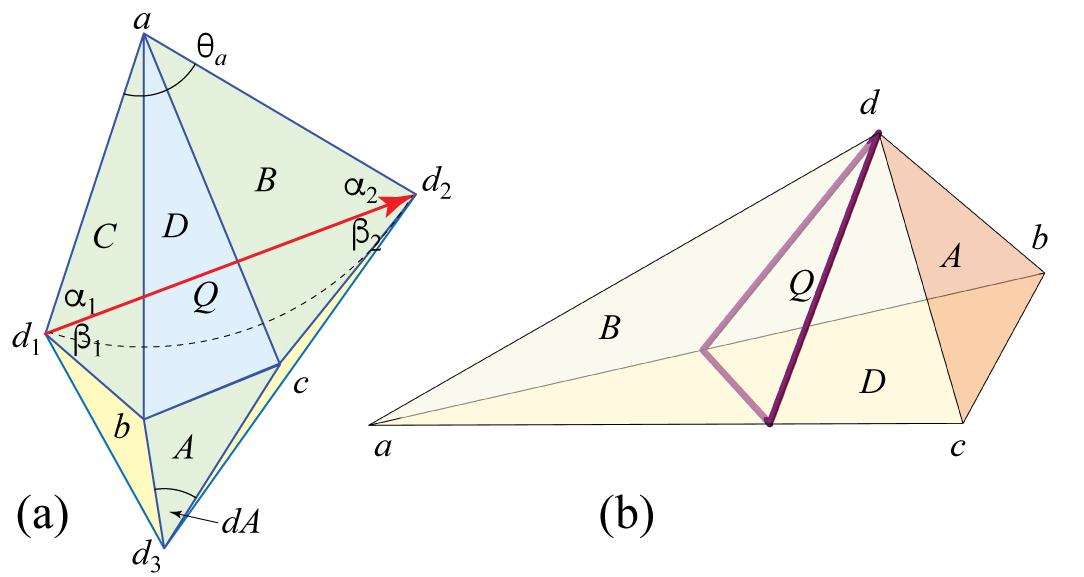}
\caption{Case~1. (a)~Unfolding of a tetrahedron
when  $\o_a > \pi$ and the other three curvatures are $< \pi$.
$d$ is closest to $a$.
Curvatures at $a,b,c,d$ are
$282^\circ,140^\circ,173^\circ,124^\circ$.
Auxiliary yellow triangles added.
(b)~The quasigeodesic in 3D.
}
\figlab{Q1_Case1ab}
\end{figure}

%%%%%%%%%%%%%%%%%%%%%%%%%%%%%%%%%%%%%%%%%%%%%%%%%%%%%%%%%%%%%%%%

\newpage
\subsection{Case~2}
For Case~2,
assume $T$ has %exactly 
at least
two vertices with curvatures more than $\pi$:
$\o_a \ge \o_b \ge \pi$.

%\blue{[Emails 13,14,15 Dec2021.]}
%
%Let $T$ be a non-isosceles tetrahedron, of vertices $v_i$
%and respective curvatures $\o_i$, with $i=1,2,3,4$.

The reader may
find it easier to follow the proof on the particular case of
$T$ a doubly covered quadrilateral.
The next argument, however, is valid for the general situation.

%%%

We will consider geodesic loops $Q_v$ at vertex $v$, with $v \in \{a,b,c,d\}$ suitably chosen, as constructed at Case 1.

Consider a closest vertex $v$ to $a$ among $b, c, d$.

\paragraph{Case~2.1:} $v=b$: $|ab| \le |ac|,|ad|$: $b$ is closest to $a$.
Then there exists a geodesic loop $Q_b$ at $b$ 
which separates $a$ from $c, d$.
We now justify this claim.
As illustrated in Fig.~\figref{Case21}, 
the segment $Q_b = b_1 b_2$ cannot be blocked by vertex $a$
because $\q_a  \leq \pi$, 
and cannot be blocked by vertices $c$ or $d$,
because they fall outside the circle centered at $a$ of radius $|ab|$.

An equivalent but different way to view this is as follows.
View $a$ as the apex of $T$, and remove the base $bcd$. Extend the
faces $B,C,D$ incident to $a$ to a cone $\cal{C}$.
Then on $\cal{C}$
there are geodesic loops based on each of $b,c,d$.
Because $b$ is closest to $a$, the loop $Q_b$ lives on $T$.

Because $\o_b  \geq \pi$, the complete angle $\q_b$ at $b$ is at most $\pi$, 
hence $Q_b$ has less than $\pi$ to each side,
and so is a simple closed quasigeodesic.
This happens irrespective of $\o_c, \o_d$ being larger or smaller than $\pi$.

%see Fig.~\figref{Case21}.
%%%%%%%%%%%%%%%%%%%%%%%%%%%%%%%%%Figure Begin
\begin{figure}[htbp]
\centering
\includegraphics[width=0.6\linewidth]{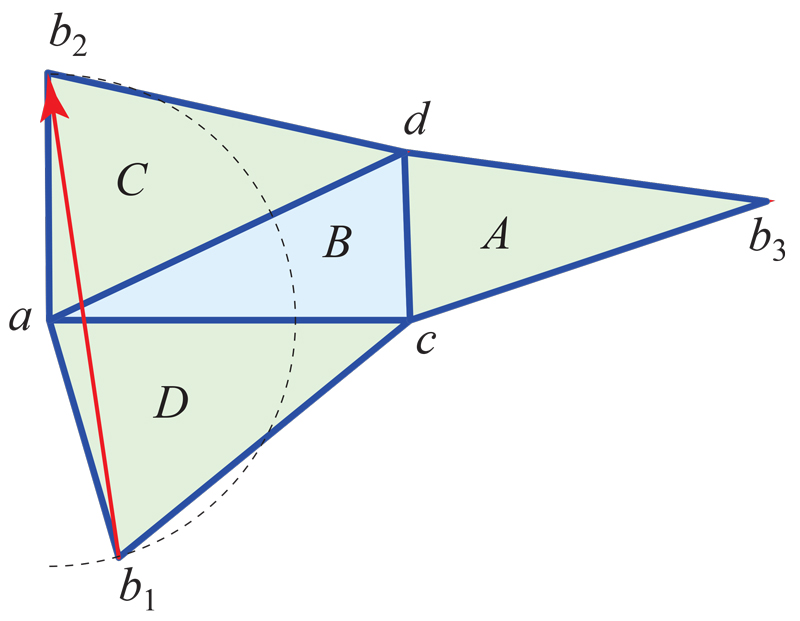}
\caption{Case~2.1: $b$ is closest to $a$. 
Curvatures at $a,b,c,d$ are
$196^\circ,190^\circ,159^\circ,175^\circ$.}
\figlab{Case21}
\end{figure}
%%%%%%%%%%%%%%%%%%%%%%%%%%%%%%%%%Figure End

%%

\newpage
\paragraph{Case~2.2:} $v=c$; i.e., $c$ (or equivalently $d$) is closest to $a$: $|ac| \le |ab|,|ad|$.

Consider now a closest vertex $w$ to $b$ among $a, c, d$.

\paragraph{Case~2.2.1:} $w=a$: $|ba| \le |bc|,|bd|$.
This is handled by Case~2.1 with the roles of $a$ and $b$ reversed.

\paragraph{Case~2.2.2:}  $w=d$:
$|bd| \le |ba|,|bc|$. And $|ac| \le |ab|,|ad|$:
$c$ is closest to $a$ and $d$ is closest to $b$.

As illustrated in Fig.~\figref{Case222_cd}(ab),
the geodesic loop $Q_c$ at $c$ separates $a$ from $b,d$ 
because $c$ is closest to $a$ and so $b,d$ cannot interfere,
and the geodesic loop $Q_d$ at $d$ separates $b$ from $a,c$,
because $d$ is closest to $b$ and so $a,c$ cannot interfere.

Now we argue that one or the other of $Q_c, Q_d$ is a quasigeodesic.
Note first that, because $\o_a,\o_b \ge \pi$, the angle at $c$ toward $a$
(left in the figure), and the angle at $d$ toward $b$ (right in the figure), are $< \pi$
in $\triangle a c_1 c_2$ and $\triangle b d_1 d_2$ respectively.
Next we examine the angles to the other side of $Q_c, Q_d$.

With $a,b$ separated by $Q_c$ and $Q_d$, those two geodesic loops
bound a vertex-free region $R$ on the surface of $T$, homeomorphic to a cylinder.
As Fig.~\figref{Case222_cd}(c) illustrates, 
cut $R$ is isometric to a planar quadrilateral
$c_1 c_2 d_1 d_2$. Because the quadrilateral angles sum to $2\pi$, it cannot
be that the two angles at $c$ and the two angles at $d$ both exceed $\pi$.
At least one must be $\le \pi$.
The respective geodesic loop is therefore a quasigeodesic.
Fig.~\figref{Case222_3D} shows the two geodesic loops in 3D.

%see Fig.~\figref{Case222_cd}.
%%%%%%%%%%%%%%%%%%%%%%%%%%%%%%%%%Figure Begin
\begin{figure}[htbp]
\centering
\includegraphics[width=0.8\linewidth]{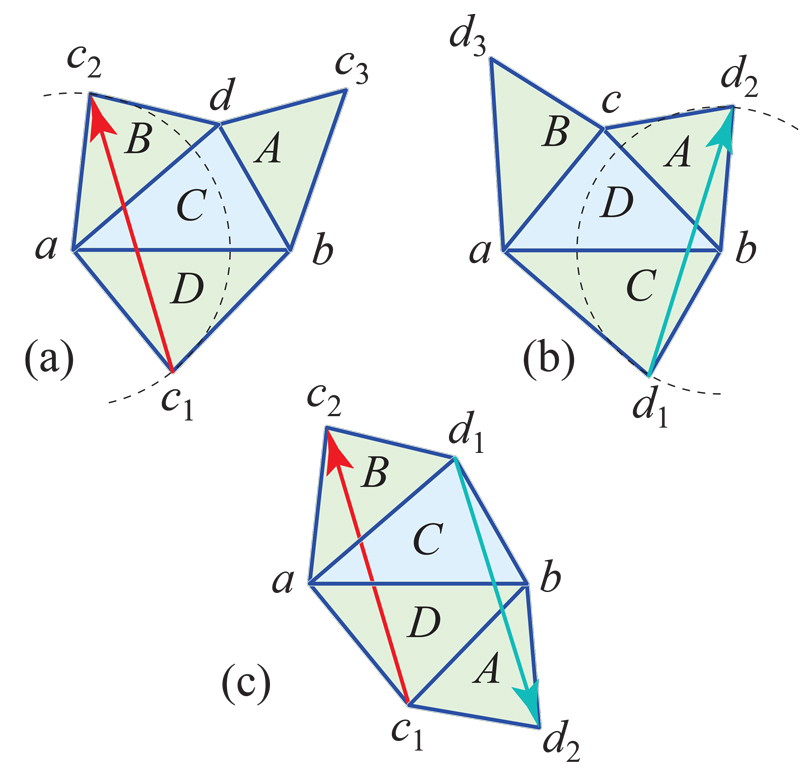}
\caption{Case~2.2.2: $c$ is closest to $a$ and $d$ is closest to $b$.
$Q_c$ red, $Q_d$ green.
Curvatures at $a,b,c,d$ are
$226^\circ,205^\circ,138^\circ,151^\circ$.
(The two segments $c_1 c_2$ and $d_1 d_2$ are slightly non-parallel.)
}
\figlab{Case222_cd}
\end{figure}
%%%%%%%%%%%%%%%%%%%%%%%%%%%%%%%%%Figure End
%see Fig.~\figref{Case222_3D}.
%%%%%%%%%%%%%%%%%%%%%%%%%%%%%%%%%Figure Begin
\begin{figure}[htbp]
\centering
\includegraphics[width=0.6\linewidth]{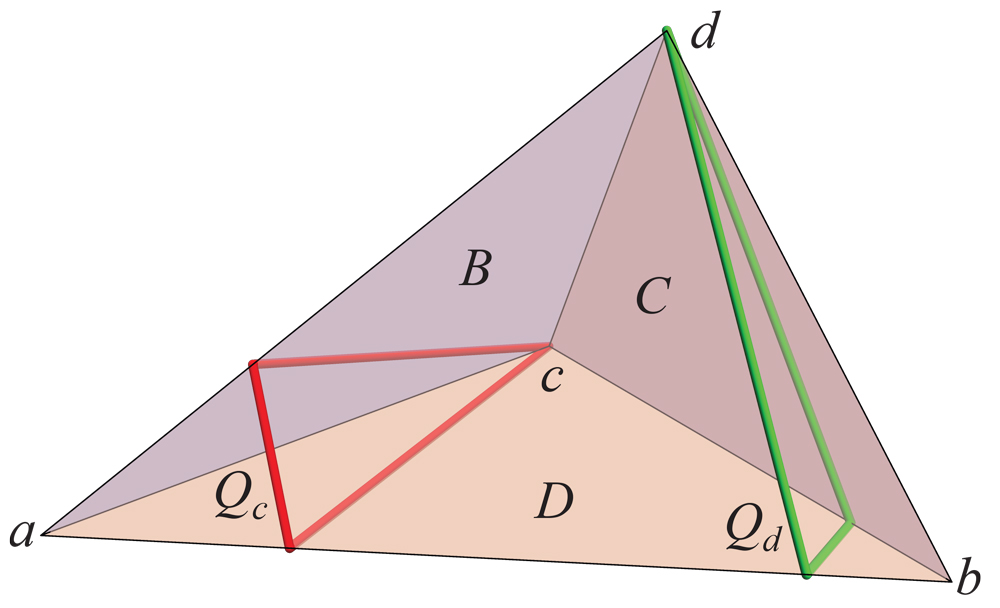}
\caption{Case~2.2.2: $Q_c$ and $Q_d$ on the 3D tetrahedron
unfolded in Fig.~\protect\figref{Case222_cd}. $Q_c$ is a quasigeodesic.}
\figlab{Case222_3D}
\end{figure}
%%%%%%%%%%%%%%%%%%%%%%%%%%%%%%%%%Figure End

%

\paragraph{Case~2.2.3:}   $w=c$; i.e., $c$ is closer than $d$ to both $a$ and $b$: 
$|bc| \le |ba|,|bd|$. And $|ac| \le |ab|,|ad|$:

As illustrated in Fig.~\figref{Case223_cd}(a),
just as in Case~1, 
because $c$ closest to both $a$ and $b$,
there exist geodesic loops $Q_c,Q'_c$ at $c$ such that
$Q_c$ separates $a$ from $b,d$ and 
$Q'_c$ separates $b$ from $a,d$:
vertex $d$ cannot interfere with either $Q_c=c_1 c_2$ nor $Q'_c=c_1 c_3$.

However, although one can construct two geodesic loops at $d$,
$Q_d$ on the cone apexed at $a$ and 
$Q'_d$ on the cone apexed at $b$,
they may not stay inside $T$.
We now argue that at least one of $Q_d$, $Q'_d$ lives on $T$.

Fig.~\figref{Case223_cd}(b) illustrates the situation when one, in this
case $Q_d=d_1 d_2$, falls outside $T$.
It should be clear that $d_1$ can see $c$, because $\o_a,\o_b \ge \pi$, so
the two faces $C \cup D$ form a convex quadrilateral.
The exterior angle gap at $c$ can only block visibility from one of $d_2,d_3$.
Another way to view this is that, if $Q_d=d_1 d_2$ does not live on $T$,
then it separates $a,c$ from $b$.
But then  $Q'_d=d_1 d_3$ even more so separates $a,c$ from $b$,
and lives on (remains inside the surface of) $T$.

%see Fig.~\figref{Case223_cd}.
%%%%%%%%%%%%%%%%%%%%%%%%%%%%%%%%%Figure Begin
\begin{figure}[htbp]
\centering
\includegraphics[width=0.95\linewidth]{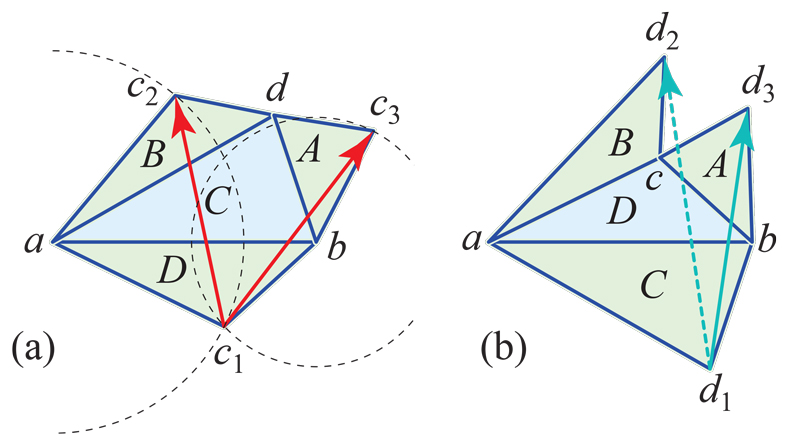}
\caption{Case~2.2.3: $c$ is closer than $d$ to both $a$ and $b$.
$Q_c,Q'_c$ red, $Q_d,Q'_d$ green.
Curvatures at $a,b,c,d$ are
$284^\circ,200^\circ,58^\circ,178^\circ$.
}
\figlab{Case223_cd}
\end{figure}
%%%%%%%%%%%%%%%%%%%%%%%%%%%%%%%%%Figure End

So now we have two geodesic loops, say, $Q_c = c_1 c_2$ and $Q_d= d_1 d_2$.
Then, just as in Case~2.2.2, we have identified a vertex-free region $R$, with
one geodesic excluding $a$ to the left, the other $b$ to the right.
And following the same logic as in Case~2.2.2, we conclude that at least one of
the angles toward $R$ at $c$ or $d$ must be $\le \pi$.
It is straightforward that the angles to the other side are $\le \pi$: 
 $\triangle a c_1 c_2$ and $\triangle b c_1 c_2$ and $\triangle b d_1 d_3$.

\newpage

This completes the proof of Theorem~\thmref{Q1}.
There is a sense in which this theorem cannot be strengthened, because there
are tetrahedra that have only one such $Q_1$.
(That more complex ``spiraling'' geodesic loops are
not possible is a consequence of~\cite[Lem.~8]{ChartierArnaud}.)
We claim that
Fig.~\figref{PointedOnly1Q1_2D} is an example of 
a tetrahedron with only one simple $Q_1$.

%see Fig.~\figref{PointedOnly1Q1_2D}.
%%%%%%%%%%%%%%%%%%%%%%%%%%%%%%%%%Figure Begin
\begin{figure}[htbp]
\centering
\includegraphics[width=1.0\linewidth]{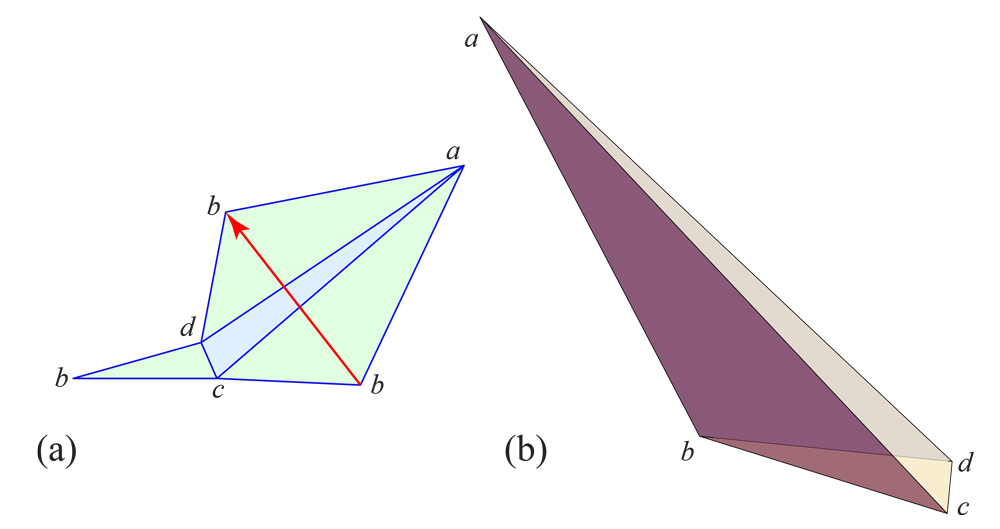}
\caption{Tetrahedron with just one $Q_1$.
Coordinates of $a,b,c,d$:
$(-0.65,0,1.56),(0,0,0),(1,0,0),(0.89,0.25,0)$.}
\figlab{PointedOnly1Q1_2D}
\end{figure}
%%%%%%%%%%%%%%%%%%%%%%%%%%%%%%%%%Figure End
%From TriLayout.nb:
%(*tetra={
%{0,0,0},{1,0,0},{0.89,0.25,0},{-0.65,0,1.56}
%};*)(*pointed, 1 Q1*)(*Kills hypothesis*)

%%%%%%%%%%%%%%%%%%%%%%%%%%%%%%%%%%%%%%%%%%%%%%%%%%%%%%%%%%%%%%%%%%%

\section{$Q_2$: $2$-Vertex Quasigeodesics}
\seclab{Q2}
The goal of this section is to prove this theorem:

\begin{thm}
\thmlab{Q2}
Every tetrahedron has a $2$-vertex simple closed quasigeodesic.
\end{thm}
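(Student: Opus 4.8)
The plan is to realize a $Q_2$ as a \emph{doubled-edge loop}. Pick two vertices, say $a$ and $b$, and form the closed curve whose first arc is the edge $ab$ itself --- a geodesic running along the crease between faces $C$ and $D$ --- and whose second arc is the geodesic from $a$ to $b$ crossing the two faces $A,B$ incident to the opposite edge $cd$. This second arc is exactly the straight segment $ab$ in the development of $A\cup B$ across $cd$, meeting $cd$ at an interior point. The two arcs lie on complementary face pairs and meet only at $a$ and $b$, so the resulting loop is automatically simple, passes through exactly the vertices $a,b$, and separates $c$ from $d$.

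The heart of the argument is that the quasigeodesic test reduces to two complementary, easily checked curvature conditions. Since at a curve vertex the two side angles sum to the complete angle, if $\q_a,\q_b\le\pi$ --- equivalently $\o_a,\o_b\ge\pi$ --- the angle condition holds automatically. Dually, applying the second form of Gauss--Bonnet to the $c$-side region (which contains only vertex $c$) shows that the two side angles of the loop lying toward $c$, one at $a$ and one at $b$, sum to exactly $\o_c$; likewise the two angles toward $d$ sum to $\o_d$. Hence if $\o_c,\o_d\le\pi$, each individual side angle is at most $\pi$. Now order the curvatures $\o_1\ge\o_2\ge\o_3\ge\o_4$ (they sum to $4\pi$). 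Because $\o_2\ge\o_3$, one cannot have both $\o_2<\pi$ and $\o_3>\pi$, so always $\o_2\ge\pi$ or $\o_3\le\pi$. Taking the doubled-edge loop through the two highest-curvature vertices then satisfies the angle condition in every case: via the first criterion when $\o_2\ge\pi$, and via the second --- with the two lowest-curvature vertices off the loop --- when $\o_3\le\pi$.

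The remaining, and I expect harder, point is \emph{existence}: the second arc must stay on $T$, i.e.\ the developed segment $ab$ must actually cross the segment $cd$ rather than pass outside it. This can fail only when an off-loop vertex is flat enough ($\o<\pi$) to poke across the segment in the development --- precisely the low-curvature regime in which the angular side is most favorable but realizability is threatened. I would dispatch this exactly as in Theorem~\thmref{Q1}: compare the distances from the off-loop vertices to the two loop vertices, star-unfold, and show the relevant geodesic lies inside a disk on $T$, replacing the degenerate edge-arc by a genuine second geodesic arc when the plain doubled edge leaves the surface. The isosceles tetrahedra arise as the degenerate limit in which both arcs collapse onto the edge and each side angle equals the common value, recovering the doubled edge of Fig.~\figref{TetraNo1VertQuasi}. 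Thus the main obstacle is not the angle bound --- made essentially automatic by the curvature-ordering observation --- but the unfolding case analysis that guarantees a realizing arc in every curvature regime.
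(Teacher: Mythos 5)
Your high-level plan coincides with the paper's: the doubled edge between two vertices of curvature $\ge\pi$ is already a degenerate $Q_2$, and in the remaining ``pointed'' case (one vertex $a$ with $\o_a>\pi$, the other three curvatures $<\pi$) one seeks a nondegenerate \emph{edge-loop}, the edge plus a second geodesic arc between the same two vertices crossing the opposite edge. Your angle verification is correct and in one respect cleaner than the paper's: applying Gauss--Bonnet to the disk containing only $c$, the turns $(\pi-\a_1)+(\pi-\a_2)$ plus $\o_c$ equal $2\pi$, so the two side angles toward $c$ sum to exactly $\o_c$, and likewise toward $d$; the paper instead bounds the angles at the far endpoint as apex angles of triangles in the star-unfolding (Lemma~\lemref{LoopQuasi}). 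Note, though, a consequence you only half-exploit: in the pointed case \emph{every} off-loop pair has curvature $<\pi$, so your criterion certifies an edge-loop through $a$ and \emph{any} other vertex --- which is fortunate, because your specific choice of the two highest-curvature vertices is not always realizable.

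That is where the genuine gap lies: \emph{existence} of the second arc, which you defer with ``dispatch exactly as in Theorem~\thmref{Q1},'' is precisely the hard content of the paper's proof, and the Q1 machinery does not deliver it. The Q1 arguments produce geodesic loops at a single vertex: segments between two images of the \emph{same} vertex, controlled as chords of a circle centered at a nearby high-curvature vertex. Here one needs a vertex-to-vertex diagonal of the star-unfolding $S_a$ (a segment $a_v v$), and no closest-vertex comparison obviously yields one; the paper proves this as Lemma~\lemref{Visibility}, using the cut-locus fact that $C(a)$ lies inside the opposite face (Lemma~\lemref{CutLocusTetra}) together with a cone/half-plane analysis of where the images $a_b,a_c,a_d$ can sit, culminating in three strict inequalities ($2\g>\pi-\b$, $2\b>\pi-\d$, $2\d>\pi-\g$) whose sum gives the contradiction $2\pi>2\pi$. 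Moreover, Fig.~\figref{PointedVisib_cex} exhibits a pointed tetrahedron on which two of the three candidate diagonals are genuinely blocked, so any argument that fixes a preferred vertex pair in advance --- as your ``two highest curvatures'' plan does --- can fail; one must prove that \emph{some} diagonal survives and then accept whichever pair it connects (your Gauss--Bonnet criterion would then still apply). As written, your proposal names the decisive step but does not supply it.
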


\subsection{Degenerate $2$-vertex quasigeodesics}
If a tetrahedron has at least two vertices with curvature at least $\pi$,
then the complete angle incident to those two vertices is each $\le \pi$.
So the doubled edge connecting them constitutes a degenerate simple closed quasigeodesic:
at each endpoint, the angle to one side is $0$, and to the other side at most $\pi$.

Define a tetrahedron as \emph{pointed} if it has just one
vertex with curvature exceeding $\pi$.
We will consistently use the label $a$ for that vertex,
so it is \emph{pointed at $a$}.

The remainder of this section concentrates on pointed tetrahedra.
First we review some tools used in the proof.

%%%%%%%%%%%%%%%%%%%%%%%%%%%%%%%%%%%%%%%%%%%%%%

\subsection{Star-Unfolding and Cut Locus}

A {\em geodesic segment} on a $P$ is a shortest path between its extremities.

The {\em cut locus} $C(x)$ of the point $x$ on $P$ is the set of endpoints (different from $x$) of all nonextendable geodesic segments (on the surface $P$) starting at $x$.
%Equivalently, it is the closure of the set of all those points $y$ to which there is more than one shortest path on $P$ from $x$.

The \emph{star-unfolding} of $T$ with respect to its vertex $v$ is obtained by cutting along the edges incident to $v$ and %plane unfolding.
unfolding to the plane.

We need one property of the star-unfolding that derives from~\cite{ao-nsu-92} and
is stated as Lemma~3.3 in~\cite{aaos-supa-97}.
To avoid introducing notation not needed here, we specialize this lemma
to our situation:

\begin{lem}
\lemlab{CutLocusTetra}
Let $S_v$ be the star-unfolding of a tetrahedron from vertex $v$.
Then the cut locus
$C(v)$ is the restriction to $S_v$ of the Voronoi diagram of the images of $v$.
Moreover, $C(v)$ lies entirely in the face opposite $v$.
In particular, the degree-$3$ ramification point $y$ lies in that face,
and is connected by segments to that face's three vertices.
\end{lem}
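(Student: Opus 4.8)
The plan is to reduce the statement to the already-cited fact that $C(v)$ is the restriction to $S_v$ of the Voronoi diagram of the images of $v$, and then to read off the geometry for a tetrahedron. Cutting the three edges at $v$ separates the three faces incident to $v$ from one another but leaves each of them attached to the opposite face $A=\triangle pqr$ along a base edge. Thus $S_v$ consists of the central triangle $A$ together with three ``flaps,'' and $v$ has exactly three images $v_1,v_2,v_3$, one at the apex of each flap; say $v_1,v_2,v_3$ sit across the edges $qr,rp,pq$ respectively. Since the Voronoi diagram of three (non-collinear) points is precisely three rays issuing from their circumcenter, the cited part immediately gives that $C(v)$ is a ``Y'' with a single degree-$3$ ramification point, which I take to be $y$.

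The second step is to locate the three bisectors. Here I would use that each vertex of $A$ is joined to $v$ by an edge, and that edge appears in both flaps whose base edges meet at that vertex. Consequently $|p v_2|=|p v_3|=|pv|$, $|q v_1|=|q v_3|=|qv|$, and $|r v_1|=|r v_2|=|rv|$, so each of $p,q,r$ is equidistant from the two images flanking it and therefore lies on the corresponding bisector. Hence the bisector of $v_2,v_3$ is the line through $y$ and $p$, the bisector of $v_1,v_3$ the line through $y$ and $q$, and the bisector of $v_1,v_2$ the line through $y$ and $r$: the three Voronoi rays point toward the three vertices of $A$, so the three arms of $C(v)$ are directed at $p,q,r$.

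The crux is to show that these arms, and the point $y$ itself, lie in $A$ rather than straying into a flap. The key inequality is that the ``far'' image is never closer to a base vertex than the two flanking images, e.g. $|q v_2|\ge|qv|$. I expect this to be the main obstacle, but it has a clean cause: the edge $qv$ is a straight segment of $\R^3$ lying on the surface, so it realizes the ambient distance $|q-v|$ and is therefore a shortest path on $T$; every other surface path from $q$ to $v$, in particular the one developed through the flap across $rp$, has length at least $|qv|$. Granting the three such inequalities, each flap (a triangle with apex $v_i$) has all three of its vertices in the closed Voronoi cell of its own apex $v_i$, and since that cell is convex the whole flap lies in it, with its interior in the open cell. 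Hence $C(v)$ meets no flap interior, so $C(v)\subseteq A$.

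It remains to assemble the pieces. With $C(v)$ confined to the closed face $A$ and equal to the Y described above, the ramification point $y$ must lie in $\inn(A)$ (it cannot sit on $\bd A$ without collapsing an arm), and each ray, restricted to $S_v$, is exactly the straight segment from $y$ to the corresponding vertex of $A$: the portions beyond $p,q,r$ fall into the angular gaps of curvature $\o_p,\o_q,\o_r$ left open at those vertices in the unfolding, hence outside $S_v$. By convexity of the triangle $A$ these three segments lie in $A$, so $y$ is joined by segments to the three vertices of the face opposite $v$, as claimed.
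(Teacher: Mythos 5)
Your overall architecture is sound, and it is in fact more self-contained than the paper, which proves nothing here: the paper imports the entire statement from the star-unfolding literature (Lemma~3.3 of the Agarwal--Aronov--O'Rourke--Schevon paper, deriving from Aronov--O'Rourke), adding only a one-sentence intuition for why $C(v)$ avoids the faces incident to $v$. Your reduction -- each base vertex is equidistant from its two flanking images, each flap lies in the closed Voronoi cell of its own apex by convexity, hence the diagram is confined to $A$ -- is a legitimate and nicer route, \emph{provided} the key inequality $|q v_2|\ge |qv|$ (far image no closer than the flanking ones) holds. You correctly identified this as the crux, but your justification for it is flawed. You argue that the planar segment $q v_2$ is ``developed through the flap across $rp$'' and hence is a surface path from $q$ to $v$ of length $|qv_2|$. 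That development exists only if the segment stays inside $A\cup(\text{flap across } rp)$, i.e., inside the quadrilateral $q\,r\,v_2\,p$. But the unfolding is reflex at a base vertex exactly when the two face angles there sum to more than $\pi$ (interior angle $2\pi-\o>\pi$ whenever the curvature is $<\pi$) -- precisely the ``face fails at a vertex'' condition that Section~\secref{Q3} of the paper shows is common. When the quadrilateral is reflex at $r$ or $p$, the diagonal $q v_2$ exits through the curvature gap and corresponds to no surface path. One cannot repair this by taking the geodesic within the region instead: that path has length $L\ge d_T(q,v)=|qv|$ but also $L\ge |qv_2|$, so the chain of inequalities runs in the wrong direction and proves nothing about $|qv_2|$.

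The inequality itself is true, and the standard fix is a hinge (Alexandrov-type) monotonicity argument rather than a development argument: $v_2$ is obtained from $v$ by rotating $\triangle vrp$ about the line $rp$ until the dihedral angle opens to $\pi$. Writing $f$ for the foot of the perpendicular from $v$ to the line $rp$, $\r=|vf|$, and $\phi$ for the angle at $f$ between the rotating image of $v$ and the component of $q-f$ orthogonal to $rp$, one gets $|q v_\phi|^2=|q-f|^2+\r^2-2\r\,|q'|\cos\phi$, which is strictly increasing as $\phi$ runs from the dihedral angle $\d\in(0,\pi)$ to $\pi$; hence $|qv_2|>|qv|$ for every nondegenerate tetrahedron, with equality only in the flat case. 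With that substitution your proof goes through (and the strictness even cleans up your later boundary cases, e.g., ruling out $y=p$, since $p$ would then be equidistant from all three images). Two smaller points you wave at rather than prove: the non-collinearity of $v_1,v_2,v_3$ (it follows from the same strict inequalities, since a base vertex tied between the two extreme collinear sites would be strictly closer to the middle one), and the fact that the branch point $y$ actually lies in $S_v$ -- your ``equal to the Y described above'' implicitly uses that the cut locus is a connected tree with leaves $p,q,r$, which is standard but worth saying, especially since the lemma's phrasing presupposes $y$ exists.
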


\noindent
One can see this intuitively: If $y$ were interior to a face incident to $v$, then there
would be three paths from $v$ to $y$: one straight in 3D, 
but the other two with some 3D aspect, a contradiction.

\paragraph{Sketch of Proof for Theorem~\thmref{Q2}.}
The proof first establishes a visibility relation in the star-unfolding that yields
an \emph{edge-loop}: a geodesic connecting to the endpoints of an edge of $T$.
Second, the quasigeodesic condition is proved to hold at both ends of the geodesic,
thereby establishing a $2$-vertex simple closed quasigeodesic.

%see Fig.~\figref{PointedVisib_cex}.
%%%%%%%%%%%%%%%%%%%%%%%%%%%%%%%%%Figure Begin
\begin{figure}[htbp]
\centering
\includegraphics[width=0.4\linewidth]{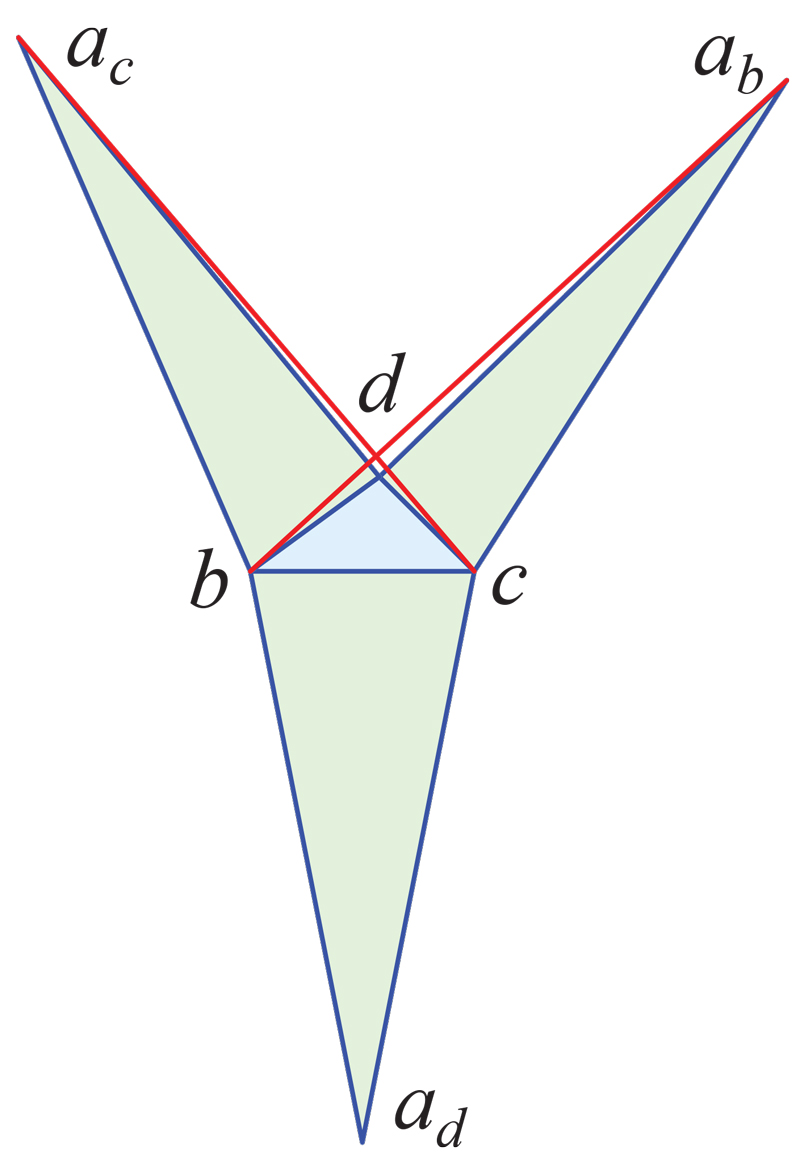}
\caption{Pointed $T$ with just one visible segment, $a_d d$. Both $a_b$ and $a_c$ are blocked.}
\figlab{PointedVisib_cex}
\end{figure}
%%%%%%%%%%%%%%%%%%%%%%%%%%%%%%%%%Figure End

%%%%%%%%%%%%%%%%%%%%%%%%%%%%%%%%%%%%%
\subsection{Visibility}
Let $T$ be a tetrahedron pointed at $a$, and
$S_a$ the star-unfolding of $T$ with respect to $a$.
Label the three images of $a$ as $a_b,a_c,a_d$, with $a_b$ opposite $b$,
%in the sense that $a_b$ is the apex of the triangle 
%$\triangle b a_b c$---the triangle not incident to $a$---
and similarly for $a_c$ and $a_d$.
Say that two vertices $v,u \in S_a$ are \emph{visible} to one another if
the segment $vu$ is nowhere exterior to $S_a$,
and touches $\partial S_a$ only at $u$ and $v$.
So $uv$ is a vertex-to-vertex \emph{diagonal} of $S_a$.

\begin{lem}
\lemlab{Visibility}
If $T$ is pointed at $a$, then at least one of $b,c,d$ is visible in $S_a$
to $a_b,a_c,a_d$ respectively.
And sometimes there is only one such visibility relation.
\end{lem}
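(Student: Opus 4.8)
The plan is to read off the combinatorial shape of $S_a$ and reduce the claim to a visibility statement about three diagonals of an explicit hexagon. First I would record the boundary of $S_a$: cutting the three edges at $a$ leaves the opposite face $A=bdc$ as a central triangle with the faces $B,C,D$ unfolded outward across its edges, so $S_a$ is the hexagon with cyclic vertex sequence $b,a_c,d,a_b,c,a_d$. Its interior angle at each base vertex $v\in\{b,c,d\}$ is the complete angle $\q_v=2\pi-\o_v\ge\pi$ (reflex, since $T$ is pointed), while its interior angle at each source image $a_v$ is the face angle of $a$ there, and these three angles sum to $\q_a<\pi$. Hence $S_a$ is a three-pointed star: its convex hull is the triangle $\triangle a_b a_c a_d$, from which three isosceles ``ears'' have been removed, the ear at $v$ being the triangle sitting on the hull edge opposite $a_v$, with apex $v$, apex angle exactly $\o_v$, and the two equal sides the two unfolded copies of the edge $av$.

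Next I would translate ``$v$ is visible to $a_v$'' into the internality of the long diagonal $v a_v$, which joins the reflex vertex $v$ to the opposite convex vertex $a_v$. Such a diagonal can leave the hull triangle only through the two ears incident to $a_v$; concretely, $v a_v$ is internal iff the quadrilateral formed by $a_v$, its two hexagon-neighbours, and $v$ is convex, equivalently iff the direction from $a_v$ to $v$ falls inside the angular window at $a_v$ whose width is the face angle of $a$ there. Thus each of the three candidate diagonals is obstructed only by the two ears adjacent to its target image, and the three windows together have total width $\q_a<\pi$.

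The key input is Gauss--Bonnet together with pointedness: $\o_b+\o_c+\o_d=4\pi-\o_a=2\pi+\q_a>2\pi$, while each $\o_v<\pi$. Consequently no two of $\o_b,\o_c,\o_d$ can both be small --- any two of them sum to more than $\pi$ --- so at most one base vertex has small curvature, i.e. at most one ear is deep. I would then select the base vertex $v$ of least curvature and claim that $v a_v$ is the required internal diagonal: the two ears that could obstruct it are exactly the two of largest curvature, hence the shallowest, removing the smallest angular sectors at $a_v$ and leaving the widest window. The main obstacle is precisely closing this last step, because whether $v a_v$ lands in its window depends not only on the depths of the obstructing ears but on the actual position of the apex $v$. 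To control this I would pass to the metric picture: the three apices $b,c,d$ lie on the three perpendicular bisectors of the sides of $\triangle a_b a_c a_d$, meeting at its circumcenter, with $|v a_u|$ equal to the edge length $|av|$; an explicit estimate of the arrival angle of $v a_v$ at $a_v$ in these coordinates, using $\o_v\le\o_u$ for the other two vertices $u$, should force that angle into the window. I expect this bookkeeping to be the only genuine work; the degenerate configurations where $\triangle a_b a_c a_d$ is right or obtuse (circumcenter on or outside the triangle), or where a curvature equals $\pi$ (a flat ear), should be checked separately but present no difficulty, and the sharpness remark ``sometimes only one'' is not needed for existence and is already exhibited by the example of Fig.~\figref{PointedVisib_cex}.
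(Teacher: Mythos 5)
Your structural setup is correct and, in fact, recovers the same geometric data the paper uses: the star-unfolding $S_a$ is indeed the hexagon $b,a_c,d,a_b,c,a_d$ with reflex base vertices, its convex hull is $\triangle a_b a_c a_d$ with three isosceles ears of apex angles $\omega_b,\omega_c,\omega_d$ removed, the apexes lie on the perpendicular bisectors of the hull triangle's sides (this is exactly the Voronoi/cut-locus fact the paper imports as Lemma~\lemref{CutLocusTetra}), and your reduction of visibility of $v$ from $a_v$ to a window condition at $a_v$ is sound, since both obstructing ears are convex triangles sharing the vertex $a_v$. The Gauss--Bonnet bookkeeping ($\omega_b+\omega_c+\omega_d=2\pi+\theta_a>2\pi$, each $<\pi$) is also correct. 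But the proof stops precisely where the lemma's entire difficulty lies: the assertion that the diagonal $v\,a_v$ from the base vertex $v$ of \emph{least} curvature is always internal is a strictly stronger statement than the lemma itself, and you explicitly defer its verification (``an explicit estimate \ldots should force that angle into the window''; ``I expect this bookkeeping to be the only genuine work''). Nothing in the proposal shows that the arrival direction of $v\,a_v$ at $a_v$ lands in the window; as you yourself note, that direction depends on the actual placement of all three apexes, not just on the depth ordering of the two obstructing ears. So as written this is a plan, not a proof.

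Moreover, there is no evidence that your selection rule is even true. In the paper's argument the blocking conditions, once translated through the bisector-penetration constraint at a corner of $\triangle bcd$, take the form $2\gamma>\pi-\beta$, $2\beta>\pi-\delta$, $2\delta>\pi-\gamma$ --- inequalities in the \emph{angles of the base triangle alone}, with no reference to which curvature $\omega_v$ is smallest; this suggests that which diagonals are blocked is governed by the shape of face $A$ rather than by the curvature ordering, so a least-curvature rule may well admit counterexamples. The paper deliberately avoids committing to a particular visible vertex: it assumes two visibility segments are blocked, places the corresponding images of $a$ in the cones obtained by extending the edges of $\triangle bcd$, uses the cut-locus lemma to force each relevant image into a halfplane $H(c,x)$ (etc.), and then sums the three resulting strict inequalities to the contradiction $2\pi>2\pi$. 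If you want to salvage your approach, the most promising repair is to abandon the least-curvature selection and instead mimic this global move: show that the three window conditions at $a_b,a_c,a_d$ cannot simultaneously fail, by summing angle inequalities around $\triangle a_b a_c a_d$ (whose corner angles total $\theta_a<\pi$), rather than trying to certify one specific diagonal.
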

\begin{proof}
The tightness claim of the lemma is established by Fig.~\figref{PointedVisib_cex}.

Because $T$ is pointed at $a$, $S_a$ is reflex at $b,c,d$.
Partition the plane into six regions by extending the three edges of 
$\triangle bcd$. Call the cone regions $C_b,C_c,C_d$ incident to $b,c,d$ respectively.
If $a_b \in C_d$ or $a_b \in C_c$, then $a_b$ cannot see $b$, and similarly for $c$ and $d$.
So, for contradiction, we will show that if we have two visibility segments
$a_b b, a_c c$
blocked, then $a_d$ can see $d$.

Let $H(x,y)$ be the halfplane to the left of the directed segment $xy$.

\paragraph{Case 1: Two $a$-images in one cone.}
Let $a_b,a_c \in C_d$, with no loss of generality.
To contradict the claim of the lemma, we would need $a_d$ in either $C_c$ or $C_b$.
Again with no loss of generality, let $a_d \in C_c$.
This requires $a_d \in H(d,c)$, for the boundary of $H(d,c)$ is the (lower) boundary of $C_c$.
See Fig.~\figref{PointedVisib_Case1}.
At the same time, it must be that $a_d \in H(b,a_c)$ in order for $S_a$ to be reflex at $b$.
But it is not possible for $a_d$ to be in both of those halfplanes below $bc$,
for the intersection is only non-empty above $cd$.
%see Fig.~\figref{PointedVisib_Case1}.
%%%%%%%%%%%%%%%%%%%%%%%%%%%%%%%%%Figure Begin
\begin{figure}[htbp]
\centering
\includegraphics[width=0.9\linewidth]{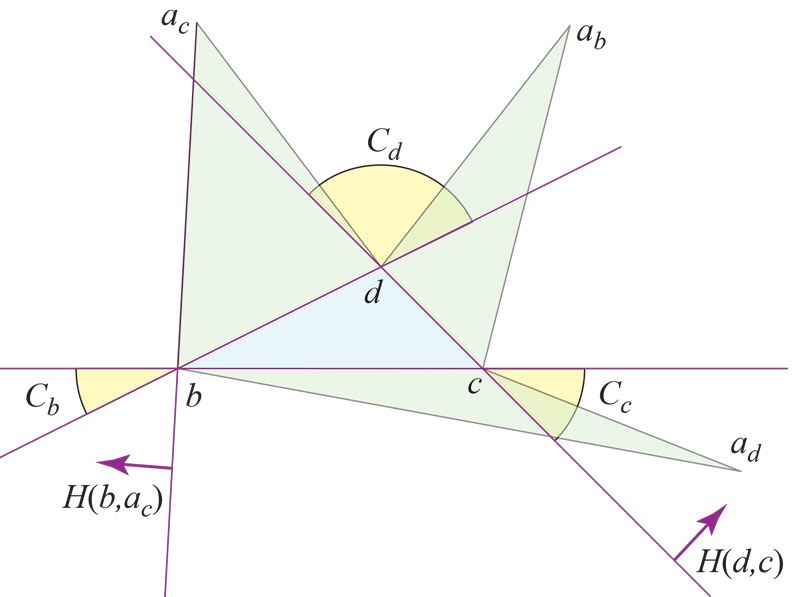}
\caption{Case~1. $a_d$ cannot be in both $C_c$ and $H(b,a_c)$.}
\figlab{PointedVisib_Case1}
\end{figure}
%%%%%%%%%%%%%%%%%%%%%%%%%%%%%%%%%Figure End

\paragraph{Case 2: Two $a$-images in two different cones.}
Let $a_b \in C_c$ and $a_c \in C_d$, with no loss of generality.
Then we seek to show that $a_d$ can see $d$.
If $a_d \in C_c$ or $a_d \in C_b$, then it is blocked from seeing $d$.
If $a_d \in C_c$, then both $a_d$ and $a_b$ are in the same cone, already
handled by Case~1. So we must have $a_d \in C_b$,
which requires $a_d \in H(b,d)$, as the boundary of $H(b,d)$ is the lower boundary of $C_b$.
See Fig.~\figref{PointedVisib_Case2}.

By Lemma~\lemref{CutLocusTetra}, the cut locus is entirely inside $\triangle bcd$.
Thus, each of $b,c,d$ connects by a segment to the ramification point $y$.
Each of these segments of the cut locus is a subsegment of a bisector.
In particular, the bisector of $a_b$ and $a_d$ lies along the segment $cy$.

Let $\b,\g,\d$ be the angles of $\triangle bcd$.
Now, in order for the bisector to penetrate into $\triangle bcd$ at $c$,
$a_d$ must lie in the halfplane $H(c,x)$, where the ray $cx$ makes an angle of $\g$ with
respect to the lower boundary of cone $Cc$.
In the example shown in Fig.~\figref{PointedVisib_Case2}, it is
clear that it is impossible for $a_d$ to be in both $H(c,x)$ and in $H(b,d)$,
because those halfplanes only intersect above $bc$.

However, if $\g$ is larger, then the two halfplanes could intersect below $bc$,
and thus there is a possible placement of $a_d$ in $C_b$ satisfying the bisector
condition. It is easy to see that the critical inequality is that we need $2 \g > \pi - \b$
for a placement to be available.

But now a similar inequality is needed for the placements of $a_c$ and $a_b$, for
each of those to both lie in the appropriate cones, and lead to bisectors that penetrate
into  $\triangle bcd$ at $b$ and $d$ respectively.
Thus these three inequalities must hold:
\begin{align*}
2 \g &> \pi - \b \\
2 \b &> \pi - \d \\
2 \d &> \pi - \g 
\end{align*}
The reason that the inequalities are strict is that equality implies that
the boundaries of $H(b,d)$
and $H(c,x)$ are parallel, and there is no spot at which to locate $a_d$ (and similarly
for $a_c$ and $a_b$).
Summing the three inequalities leads to $2 \pi > 2 \pi$, a contradiction.

Therefore, it is not possible to have all three of $a_b, a_c, a_d$ located
in three different cones. 

Since the two Cases cover all possibilities,  
at least one image of $a$ must lie in a region between
cones, and so can see the corresponding vertex of $\triangle bcd$.
\end{proof}
%see Fig.~\figref{PointedVisib_Case2}.
%%%%%%%%%%%%%%%%%%%%%%%%%%%%%%%%%Figure Begin
\begin{figure}[htbp]
\centering
\includegraphics[width=0.9\linewidth]{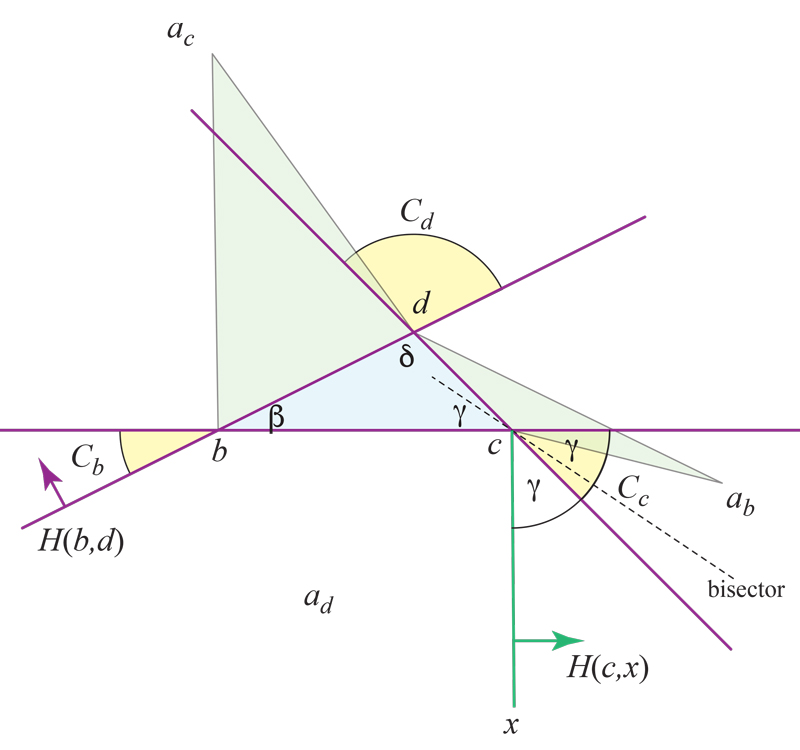}
\caption{Case~2: $a_d$ must lie in $H(c,x)$ for the bisector of
$a_d$ and $a_b$ to penetrate $\triangle bcd$ at $c$.}
\figlab{PointedVisib_Case2}
\end{figure}
%%%%%%%%%%%%%%%%%%%%%%%%%%%%%%%%%Figure End

As a consequence of Lemma~\lemref{Visibility}, we have established that every pointed
tetrahedron has an edge-loop. Our next goal is to show that this edge-loop
is in fact a simple closed quasigeodesic, which requires at most $\pi$
angle at the endpoints of the visibility segment.

\begin{lem}
\lemlab{LoopQuasi}
Every pointed tetrahedron has a non-degenerate edge-digon forming
a simple closed quasigeodesic.
\end{lem}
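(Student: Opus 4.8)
By Lemma~\lemref{Visibility}, a pointed tetrahedron $T$ (pointed at $a$) has an edge-loop: there is an image $a_v$ of $a$ in the star-unfolding $S_a$ that sees the corresponding vertex $v \in \{b,c,d\}$ along a diagonal segment that stays inside $S_a$. Say without loss of generality $v = d$, so the visible segment is $a_d d$. Folding $S_a$ back up, this segment becomes a geodesic from $a$ to $d$ whose two ends (the two preimages of the segment meeting at $a$, and the segment meeting $d$) form a closed curve through the two vertices $a$ and $d$. The plan is to verify the quasigeodesic angle condition at \emph{both} endpoints $a$ and $d$, which is precisely what upgrades this edge-loop to a $Q_2$.

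**At vertex $a$.** This endpoint I expect to be easy. Since $T$ is pointed at $a$, we have $\o_a > \pi$, hence the complete angle $\q_a = 2\pi - \o_a < \pi$. The edge-loop arrives at $a$ and leaves at $a$ (the two images $a_v$ and the true copy), splitting the total angle $\q_a$ into a left part and a right part. Since the \emph{whole} angle at $a$ is already less than $\pi$, each side is automatically at most $\pi$. So the quasigeodesic condition holds at $a$ trivially, regardless of how the loop splits the angle there.

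**At vertex $d$: the main obstacle.** This is where the real work lies. At $d$, the incident complete angle $\q_d$ need not be small — indeed $\o_d$ may be less than $\pi$, so $\q_d > \pi$, and then it is \emph{not} automatic that each side is $\le \pi$. I first observe that one side of the loop at $d$ is the single face-angle of the face opposite $a$ incident to $d$ (the segment $a_d d$ lands flush against that face's edge in the unfolding), so that side is a single face angle, which is $< \pi$. The difficulty is the \emph{other} side, where the angle at $d$ is the sum of the remaining two face-angles incident to $d$; I must show this sum is $\le \pi$. The natural tool is Gauss--Bonnet in the turn-plus-curvature form $\t + \o = 2\pi$: the edge-loop encloses exactly the vertices on one side, and the turning of the loop at the smooth points is zero, so the total turn is concentrated at the two vertices $a,d$. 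Bounding the enclosed curvature on each side (using that $\o_a > \pi$ sits on one side and the other curvatures sum appropriately to $4\pi - \o_a < 3\pi$) should force the turn at $d$ toward the large side to be nonnegative, equivalently the angle on the reflex side to be $\le \pi$. The crux is to confirm that the enclosed-curvature bookkeeping does go the right way in \emph{every} configuration of which vertices fall on which side of the loop, and that the word ``non-degenerate'' in the statement is secured --- that is, the loop really is a digon with two distinct geodesic arcs and positive enclosed area on each side, not a doubled edge. I would handle this by checking that the visibility segment $a_d d$ strictly separates the remaining two vertices of $\triangle bcd$ from $a$, which Lemma~\lemref{CutLocusTetra} and the reflexivity of $S_a$ at $b,c,d$ guarantee, giving genuine curvature (hence positive area) on both sides.
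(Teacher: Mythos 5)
Your plan and your argument at $a$ coincide with the paper's ($\o_a>\pi$ gives $\q_a<\pi$, so both sides at $a$ are trivially fine), but at $d$ the proposal contains two genuine errors and stops short of a proof. First, the claim that one side of the loop at $d$ is ``the single face-angle of the face opposite $a$'' is false: the digon consists of the edge $ad$ together with the geodesic that unfolds to the diagonal $a_d d$, and that diagonal leaves $d$ into the interior of $S_a$, not flush along any face edge, so each side angle at $d$ is in general a mixture of face angles (part of $dA$ plus $dB$, respectively part of $dA$ plus $dC$). Second, your Gauss--Bonnet bookkeeping is miscounted: $\o_a$ does not ``sit on one side'' of the loop, because $a$ lies \emph{on} the loop and is enclosed by neither side. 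The correct accounting is that the diagonal $d\,a_d$ separates $b$ from $c$ in $S_a$ (they lie on opposite boundary chains of the star-unfolding), so each side of the digon encloses exactly one vertex, of curvature $\o_b$ resp.\ $\o_c$, each $<\pi$ since $T$ is pointed only at $a$. You explicitly flag this configuration question as ``the crux'' and leave it unresolved, and the guidance you give for resolving it points in the wrong direction; as written, the quasigeodesic condition at $d$ is not established.

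It is worth noting that the corrected bookkeeping would yield a complete proof genuinely different from the paper's, and arguably slicker: applying $\t+\o=2\pi$ to the side containing $c$ gives $(\pi-\theta_a)+(\pi-\theta_d)+\o_c=2\pi$, i.e.\ $\theta_a+\theta_d=\o_c<\pi$, and symmetrically $\theta_a'+\theta_d'=\o_b<\pi$ on the other side, so all four angles are $<\pi$ at once. The paper instead argues purely in the plane: because $\o_b,\o_c<\pi$, the vertices $b,c$ of $S_a$ are reflex, which forces the counterclockwise order $a_b,c,d,b,a_c$ of the segment endpoints around $a_d$, whence the two side angles at $d$ are the apex angles of the planar triangles $\triangle a_b\, d\, a_d$ and $\triangle a_c\, d\, a_d$, hence $<\pi$ (see Fig.~\figref{AngSplit}). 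But since your submission neither establishes the one-vertex-per-side separation nor carries out the turn computation, and relies on the two incorrect claims above, it has a genuine gap rather than an alternative proof.
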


\begin{proof}
Let the visibility  segment be $a_d d$ without loss of generality.
First, because the curvature at $a$ is $> \pi$, the total angle there
is $< \pi$, and so the quasigeodesic condition is satisfied at that end.
Now we turn to the other end at $d$.

Consider the segments incident to $a_d$.
Segment $a_d b$ is left of and $a_d c$ is right of $a_d d$, just by the
counterclockwise labeling convention for the base $bcd$.
Now, because $\o_b < \pi$ and $\o_c < \pi$, the angles at $b$ and at $c$
in the unfolding $S_a$ are both reflex. 
See Fig.~\figref{AngSplit}.
Therefore the segment endpoints incident
to $a_d$ follow the counterclockwise order $a_b, c, d, b, a_c$.
Therefore the angle at $d$ right of $a_d d$ is the apex of the triangle 
$\triangle a_b d a_d$ and the angle to the left is the apex of $\triangle a_c d a_d$.
Because both angles are $< \pi$, we have established that
the edge-loop is in fact a simple closed quasigeodesic.
\end{proof}
%see Fig.~\figref{AngSplit}.
%%%%%%%%%%%%%%%%%%%%%%%%%%%%%%%%%Figure Begin
\begin{figure}[htbp]
\centering
\includegraphics[width=0.7\linewidth]{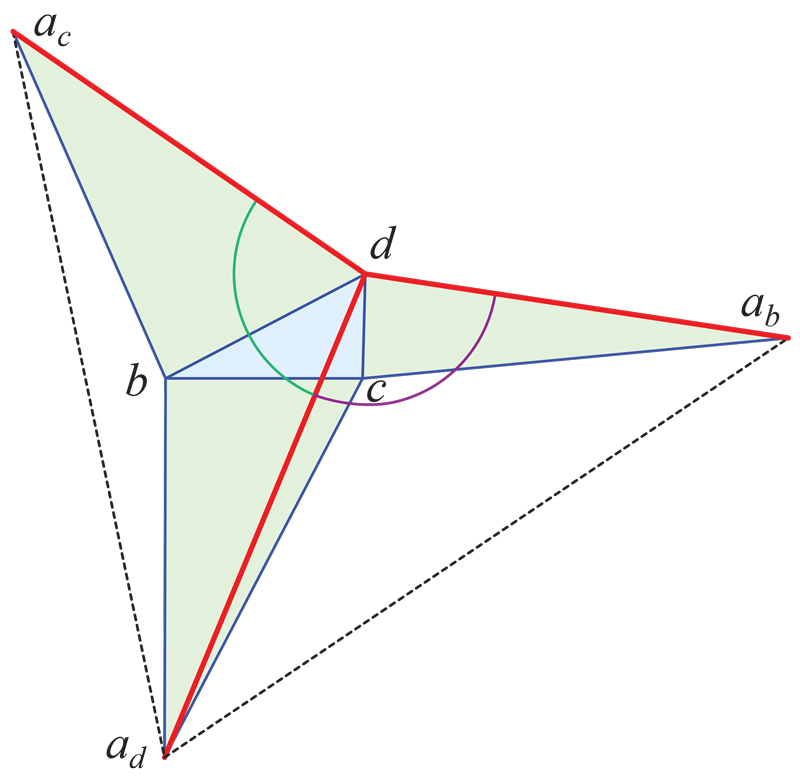}
\caption{$\triangle a_b d a_d$ and $\triangle a_c d a_d$:
triangle angles at $d$ are $<\pi$.}
\figlab{AngSplit}
\end{figure}
%%%%%%%%%%%%%%%%%%%%%%%%%%%%%%%%%Figure End

Note Lemma~\lemref{LoopQuasi} holds for every visibility segment: Although there is always at least one,
there can be as many as three.

Together with the degenerate $2$-vertex quasigeodesics on non-pointed tetrahedra,
we have established Theorem~\thmref{Q2}.

%%%%%%%%%%%%%%%%%%%%%%%%%%%%%%%%%%%%

\subsection{A geometrical interpretation of edge loops}
In this subsection we provide a geometrical interpretation of edge loops of tetrahedra.
Our construction is based on Alexandrov's Gluing Theorem and the technique of vertex merging;
see~\cite{Reshaping} for a description and other applications of these tools.

We are still in the case of tetrahedra $T$ pointed at $a$, with an edge loop based on the edge $ad$, see Fig.~\figref{AngSplit}.
Then we have $\q_a + \q_d < 2 \pi$.

Denote by $n$ the mid-point of the edge $ad$.
Cut $T$ along $ad$ and glue back differently, via Alexandrov's Gluing Theorem.
Precisely, we identify $a$ and $d$, and for the two banks of the cut, $an$ with $dn$.
Because $\q_a + \q_d < 2 \pi$, the result after gluing is a convex polyhedron $F$ with $5$ vertices: 
$b$, $c$, $n_1$ and $n_2$ obtained from $n$, and $w=ab$ obtained from both $a$ and $b$.
By construction, the curvatures at $n_1, n_2$ are precisely $\pi$.

On $F$, we can merge the vertices $c$ and $n_1$, and respectively $d$ and $n_2$, to obtain a new convex polyhedron $\D$ with three vertices: 
$w$, $u$ obtained from  $c$ and $n_1$, and $v$ obtained from $d$ and $n_2$.
Therefore, $\D$ is a doubly covered (obtuse) triangle.
See Fig.~\figref{Q2Delta_bside}.
%Figs.~\figref{Q1Q2_234},~\figref{Q1Q2Delta},~\figref{Q1Q2DeltaUnf_234}. 

The edge loop of $T$ based on $ad$ corresponds to the geodesic loop at $w$ on $\D$ obtained by doubling the height $h$ from $w$.

%see Fig.~\figref{Q2Delta_bside}.
%%%%%%%%%%%%%%%%%%%%%%%%%%%%%%%%%Figure Begin
\begin{figure}[htbp]
\centering
\includegraphics[width=1.0\linewidth]{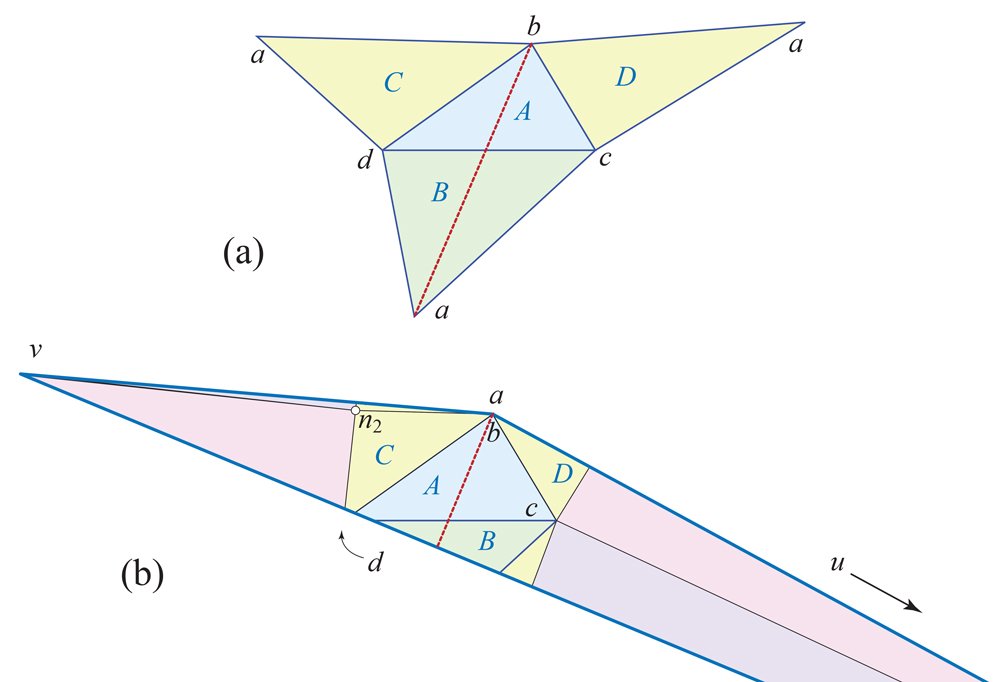}
\caption{(a)~Unfolding of a pointed tetrahedron, with edge-loop %dashed 
red.
(b)~One side of $\D$, an obtuse doubly-covered triangle with edge-loop along the altitude.
($n_1$ is on the back side.)}
\figlab{Q2Delta_bside}
\end{figure}
%%%%%%%%%%%%%%%%%%%%%%%%%%%%%%%%%Figure End

%%%%%%%%%%%%%%%%%%%%%%%%%%%%%%%%%%%%%%%%%%%%%%%%%%%%%%%%%%%%

\newpage
\section{$Q_3$: $3$-vertex Quasigeodesics}
\seclab{Q3}

The goal of this section\footnote{
This section is a revision of~\cite{o2021every}.} 
is to prove this theorem:
\begin{thm}
\thmlab{Q3}
Every tetrahedron $T$ has at least one face $F$ whose boundary
$\partial F$ is a $3$-vertex simple closed quasigeodesic $Q_3$.
\end{thm}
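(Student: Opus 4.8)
```latex
\paragraph{Proof proposal for Theorem~\thmref{Q3}.}
The plan is to show that some face $F$ of $T$ has the property that each of its
three interior face angles is at most the complete angle at the corresponding
vertex minus that face angle; equivalently, at each vertex $v$ of $F$ the angle
of the face $F$ at $v$ is $\le \pi$ \emph{and} the angle on the other three
faces (the ``outside'' of $F$ as seen along $\partial F$) is also $\le \pi$.
Concretely, for a face $F$ with vertices $x,y,z$, the boundary $\partial F$ is a
$Q_3$ precisely when at each of $x,y,z$ the two sides of $\partial F$ each
subtend $\le \pi$. One side is the single face angle of $F$ itself, which is
automatically $<\pi$ in any triangle; the other side is the sum of the two
remaining face angles at that vertex, i.e.\ $\q_v - (\text{angle of }F\text{ at }v)$.
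So the condition that $\partial F$ be a quasigeodesic reduces to the three
inequalities $\q_v - (vF) \le \pi$ at the three vertices $v$ of $F$, where $vF$
denotes the face angle of $F$ at $v$.

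First I would rewrite each inequality using curvature. At a vertex $v$ of face
$F$, writing the two face angles \emph{not} on $F$ as the pair summing to
$\q_v - vF$, the quasigeodesic condition at $v$ becomes $\q_v - vF \le \pi$.
Since $\q_v = 2\pi - \o_v$, this is $vF \ge \pi - \o_v$, i.e.\ the face angle of
$F$ at $v$ must be at least $\pi-\o_v$. Thus $\partial F$ is a $Q_3$ if and only
if, at each vertex $v$ of $F$, the angle $vF \ge \pi - \o_v$ (a vacuous
requirement when $\o_v \ge \pi$). The task is therefore to prove that at least
one of the four faces satisfies this lower bound on all three of its angles
simultaneously.

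Next I would set up a global counting/averaging argument over the four faces.
For each ordered incidence (vertex $v$, face $F$ containing $v$) call the
incidence \emph{bad} if $vF < \pi - \o_v$, i.e.\ if the face angle is too small
to meet the quasigeodesic threshold. There are twelve such incidences. At a
fixed vertex $v$ the three incident face angles sum to $\q_v = 2\pi-\o_v$; I
would argue that not too many of them can fall below $\pi-\o_v$, and then sum
the resulting bound over the four vertices. The key numerical input is
$\sum_v \o_v = 4\pi$ (the first form of Gauss--Bonnet). The aim is to show the
total number of bad incidences is at most $3$, or more precisely that the bad
incidences cannot cover all four faces; since a face fails to be a $Q_3$ only if
it contains at least one bad incidence, showing that fewer than four faces are
``hit'' forces at least one clean face, which is the desired $F$.

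The main obstacle I anticipate is that the naive averaging is too weak: a single
vertex of very large curvature contributes a vacuous condition on all three of
its incident faces, but a vertex of small curvature imposes a strong threshold
$\pi-\o_v$ close to $\pi$ on all three of its faces, and those three thresholds
can each individually exceed an average face angle. The delicate step is thus to
exploit the constraint that at each vertex the three face angles are forced to
sum to exactly $2\pi-\o_v$, so that they cannot all three be small: at a
low-curvature vertex at least one (in fact at least two) incident face angle
must be comparatively large and hence meet the threshold. I would encode this as
a per-vertex bound on the number of bad incidences and then check that the worst
case, when combined across vertices via $\sum_v\o_v=4\pi$, still leaves a face
with no bad incidence. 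If the pure counting argument leaves a narrow boundary
case, I would handle it by a direct geometric argument that the extremal
configuration forces an isosceles-type degeneracy in which equality holds and
$\partial F$ is still a (possibly degenerate) quasigeodesic.
```
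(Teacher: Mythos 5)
Your reduction is correct and matches the paper's setup exactly: writing $vF$ for the angle of face $F$ at vertex $v$, the boundary $\partial F$ is a $Q_3$ iff $\q_v - vF \le \pi$ at each vertex $v$ of $F$ (the paper phrases the negation as ``$F$ fails at $v$''). But the counting argument you build on this cannot be completed, because it is false that at a low-curvature vertex ``at least one (in fact at least two)'' incident angle must meet the threshold: if $\o_v < \pi/2$, all three angles at $v$ can be bad (e.g.\ $\o_v = 0.2\pi$ with angles $0.6\pi, 0.6\pi, 0.6\pi$ summing to $2\pi - \o_v = 1.8\pi$, each below the threshold $\pi - \o_v = 0.8\pi$). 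Worse, your whole strategy uses only linear constraints --- each face's angles sum to $\pi$, and $\sum_v \o_v = 4\pi$, which is in fact \emph{implied} by the face sums ($\sum_v \q_v = 4\pi$ automatically), so Gauss--Bonnet contributes no extra information --- and there are angle assignments satisfying all of these in which every face fails. In units of $\pi$: $bA=0.2$, $cA=0.4$, $dA=0.4$; $aB=0.15$, $cB=0.7$, $dB=0.15$; $aC=0.1$, $bC=0.2$, $dC=0.7$; $aD=0.05$, $bD=0.9$, $cD=0.05$. Every face sums to $1$, all angles are positive, and $A$ fails at $b$ ($bC+bD=1.1$), $C$ fails at $b$ ($bA+bD=1.1$), $B$ fails at $d$ ($dA+dC=1.1$), $D$ fails at $c$ ($cA+cB=1.1$). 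Here only four incidences are bad, yet they cover all four faces, so even a sharp count of bad incidences does not force a clean face.

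What this assignment violates is not any linear sum condition but the triangle inequality among the three face angles at a vertex (at $b$: $bD = 0.9 > 0.4 = bA + bC$), i.e.\ the convexity of the vertex cone --- Lemma~\lemref{trineq} in the paper. That is precisely the missing geometric input: the paper's proof is also a finite combinatorial analysis over the $12$ face angles, and for most failure patterns (failures at $4$ vertices, or at $2$ vertices, or at $3$ vertices with the doubled failure outside the spanned face) linear sums alone do yield a contradiction, much as you intend. But the pattern realized by the counterexample above --- failures at three vertices bounding a face, with one vertex doubled and the spanned face among the failing ones (the paper's Case~3b) --- is linearly feasible, and is killed only by invoking $vF < $ (sum of the other two angles at $v$) for each vertex. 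So this is not a ``narrow boundary case'' fixable by a degenerate-configuration argument as you suggest at the end; it is an open region of your feasible set, and any correct proof must import Lemma~\lemref{trineq} (or some equivalent nonlinear consequence of realizability) before the counting can close.
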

\noindent
%The proof is a case analysis on the $12$ vertices of $T$.
We first establish two preliminary lemmas that will be used in the proof.

%%%%%%%%%%%%%%%%%%%%%%%%%%%%%%%%%%%%%%%%

\subsection{Preliminary Lemmas}

\begin{lem}
\lemlab{trineq}
Let $\a_1,\a_2,\a_3$ be the face angles incident to vertex $v$
of a tetrahedron $T$.
Then the angles satisfy the triangle inequality: 
$\a_1 < \a_2 + \a_3$, and similarly
$\a_2 < \a_1 + \a_3$, and $\a_3 < \a_1 + \a_2$.
The inequalities are strict unless $T$ is flat.
\end{lem}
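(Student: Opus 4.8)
The three face angles $\a_1,\a_2,\a_3$ incident to $v$ are precisely the three plane angles of the trihedral (solid) angle formed at $v$ by the three edges of $T$ running to the other vertices, so the statement is the classical ``triangle inequality for a trihedral angle'' (Euclid XI.20). First I would reduce to a single inequality: relabel so that $\a_1 = \max\{\a_1,\a_2,\a_3\}$. Then $\a_2 \le \a_1 < \a_1+\a_3$ and $\a_3 \le \a_1 < \a_1 + \a_2$ hold for free, so only $\a_1 < \a_2 + \a_3$ requires an argument. This isolates the one nontrivial case, where the largest angle sits on the left.

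Next I would pass to the unit sphere centered at $v$. Let $p_1,p_2,p_3 \in S^2$ be the unit direction vectors of the three edges at $v$. The angular distance $d(p,q) = \arccos(p\cdot q)$ is exactly the geodesic distance on $S^2$, hence a genuine metric, and the face angle between two edges equals the angular distance between the corresponding direction points. Thus the three face angles $\a_1,\a_2,\a_3$ are precisely the three side lengths of the spherical triangle with vertices $p_1,p_2,p_3$, and the metric triangle inequality $d(p_i,p_j)\le d(p_i,p_k)+d(p_k,p_j)$ delivers $\a_1 \le \a_2 + \a_3$ immediately (and likewise for the other two orderings, although only the largest is needed). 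A fully self-contained alternative, if one prefers not to invoke the geodesic metric, is Euclid's planar construction: lay out a ray in the face of angle $\a_1$ making angle $\a_3$ with one edge, mark equal lengths, and compare the two triangles sharing an edge via SAS congruence and the hinge theorem; this reproduces $\a_1 < \a_2 + \a_3$ by elementary means.

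Finally I would address strictness. Equality $d(p_2,p_3) = d(p_1,p_2)+d(p_1,p_3)$ in the geodesic metric on $S^2$ holds \emph{only} when $p_1$ lies on the minor great-circle arc joining $p_2$ and $p_3$, i.e.\ when the three edge directions at $v$ are coplanar. But coplanarity of the three edges emanating from $v$ forces all four vertices into a single plane, which is exactly the degenerate ``flat'' tetrahedron excluded in the statement. Hence for a genuine three-dimensional $T$ every inequality is strict. I expect the main obstacle to be precisely this equality analysis: the bare triangle inequality for the angular metric is standard (and is essentially the statement itself), so the real bookkeeping is verifying that the boundary case corresponds exactly to flatness and confirming that nondegeneracy of $T$ rules it out; the Euclid construction route makes the strictness slightly more transparent, since the comparison triangles degenerate precisely when the apex ray becomes coplanar with the two edges.
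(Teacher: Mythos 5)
Your proposal is correct, but note that the paper does not actually prove this lemma inline: its entire ``proof'' is a citation to Lemma~2.8 of~\cite{Reshaping}. So you have supplied a genuine self-contained argument where the paper outsources one. Your route --- identifying $\a_1,\a_2,\a_3$ with the side lengths of the spherical triangle cut out on the unit sphere centered at $v$ by the three edge directions, and invoking the triangle inequality for the geodesic (angular) metric on $S^2$ --- is the standard proof of Euclid~XI.20 in modern dress, and your reduction to the single inequality involving the maximum angle is a clean economy. The most valuable part of your write-up is the equality analysis, which the statement's ``strict unless $T$ is flat'' clause requires and which a bare appeal to the metric inequality would miss: equality forces $p_1$ onto the minor great-circle arc from $p_2$ to $p_3$, hence the three edge directions at $v$ are coplanar, hence all four vertices of $T$ lie in one plane. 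One small point worth making explicit: the uniqueness of the minimizing arc (and thus the clean equality criterion) uses $d(p_2,p_3)<\pi$, which holds here because each $\a_i$ is an angle of a nondegenerate triangular face and so lies in $(0,\pi)$; without that remark the antipodal case would momentarily cloud the argument. With that observation added, your proof is complete and arguably more informative than the paper's citation, since it makes transparent exactly where flatness enters.
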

\begin{proof}
See Lemma~2.8 in~\cite{Reshaping}.
\end{proof}

\bs 

To simplify the calculations, angles will be represented 
in inequalities in units of $\pi$:
$1 \equiv \pi$, $2 \equiv 2\pi$, etc.
Thus under this convention, each of the $12$ face angles of a tetrahedron lies in $(0,1)$.

We say that ``\emph{face $F$ fails at vertex $v$}" if the two angles incident to $v$ not in $F$
exceed $\pi$.
So, for face $A$ to fail on vertex $b$,
then among the three
face angles $bA,bC,bD$ incident to $b$, the two angles not in $A$
satisfy $bC + bD > 1$. 
This means that $\partial A$ is not a quasigeodesic, 
because to one side---the other side from $bA$---the angle exceeds $\pi$.

\paragraph{Example.}
Fig.~\figref{OneQ3} shows a tetrahedron with 
$\partial C$ a quasigeodesic, but none of
the other face boundaries is a quasigeodesic.
Thus the ``at least one" claim of Theorem~\thmref{Q3} cannot be strengthened.
Its vertex coordinates are:
\begin{equation*}
a,b,c,d =
 (-3.54,1.98,4.58) \; (0,0,0), \; (1,0,0), \;  (4.91,3.24,0) \;.
\end{equation*}
For example, back face $C$ does not fail at vertex $b$:
$bD+bA= 125^\circ + 33^\circ = 159^\circ < \pi$.
Front face $B$ fails at vertex $c$: $cD + cA = 48^\circ + 140^\circ = 188^\circ > \pi$.
%%%%%%%%%%%%%%%%%%%%%%%%%%%%%%%%%Figure Begin
\begin{figure}[htbp]
\centering
\includegraphics[width=0.8\linewidth]{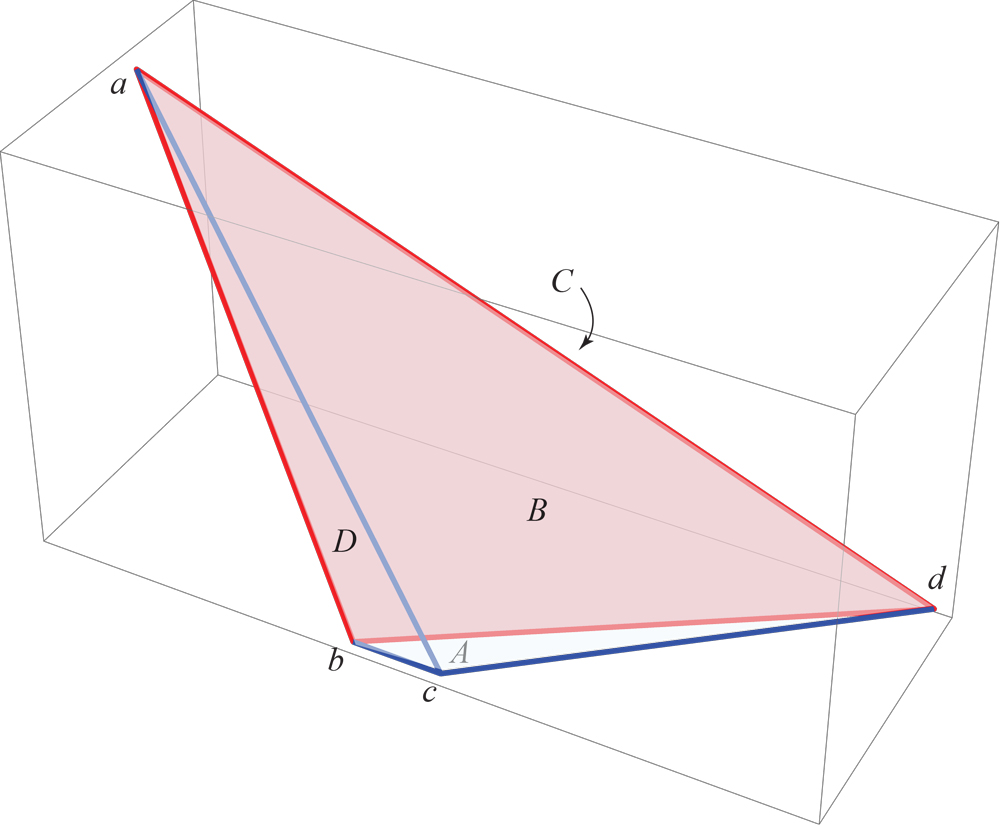}
\caption{The (red) boundary of shaded back face $C=abd$ is a quasigeodesic,
but none of $\partial A, \partial B, \partial D$ are quasigeodesics.}
\figlab{OneQ3}
\end{figure}
%%%%%%%%%%%%%%%%%%%%%%%%%%%%%%%%%Figure End

%\newpage
\begin{lem}
\lemlab{curvless1}
If a face $A$ fails at a vertex $b$, then $\omega(b)<1$.
\end{lem}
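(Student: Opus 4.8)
The plan is to unwind the definitions of ``failing'' and of curvature, working throughout in the paper's convention $1 \equiv \pi$. By hypothesis face $A$ fails at $b$, which by definition means that the two face angles incident to $b$ but not belonging to $A$ satisfy $bC + bD > 1$.

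First I would record that $b$ lies on exactly three of the four faces---namely $A$, $C$, and $D$---so the complete angle at $b$ is $\theta_b = bA + bC + bD$ and the curvature is $\omega(b) = 2 - \theta_b$ in these units. Since $T$ is a genuine (non-flat) tetrahedron, the in-face angle $bA$ is strictly positive. Combining this with the failing inequality yields
\[
\theta_b = bA + bC + bD > bC + bD > 1 ,
\]
whence $\omega(b) = 2 - \theta_b < 1$, which is exactly the assertion.

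There is essentially no obstacle here: the conclusion is immediate once the definitions are expanded, and in particular it does not even invoke the triangle inequality of Lemma~\lemref{trineq}. The only point meriting a word of care is the strict positivity $bA > 0$, which holds for any non-degenerate tetrahedron and makes the final inequality strict as stated. Conceptually, the lemma simply records that a failing vertex is forced to be a low-curvature vertex, since the two out-of-face angles alone already push the complete angle past $\pi$, before the in-face angle $bA$ is even added.
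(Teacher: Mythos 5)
Your proposal is correct and follows essentially the same route as the paper: both simply expand $\omega(b)=2-(bA+bC+bD)$, apply the failing inequality $bC+bD>1$, and use $bA>0$ to conclude $\omega(b)<1$. The paper's version and yours differ only in presentation (the paper subtracts $bA$ at the end rather than adding it to $\theta_b$ first), not in substance.
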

\begin{proof}
Since face $A$ fails at $b$, by definition, $bC + bD > 1$.
Therefore 
\begin{eqnarray*}
\omega(b) &=& 2-(bA+bC+bD)\\
\omega(b) &=& 2-(bC+bD) - bA\\
\omega(b) &<& 1 - bA\\
\omega(b) &<& 1
\end{eqnarray*}
This establishes the claim of the lemma.
\end{proof}

%%%%%%%%%%%%%%%%%%%%%%%%%%%%%%%%%%%%%%%%%%%%%%%%%%%%

\subsection{Case Analysis}
We now undertake a case analysis to show that it is not possible for all
four faces of tetrahedron $T$ to fail at vertices.
The cases, illustrated in Fig.~\figref{Cases}, distinguish first the number
of distinct vertices among the four face-failures, and second, the pattern
of the failures.

%see Fig.~\figref{Cases}.
%%%%%%%%%%%%%%%%%%%%%%%%%%%%%%%%%Figure Begin
\begin{figure}[htbp]
\centering
\includegraphics[width=0.7\textheight]{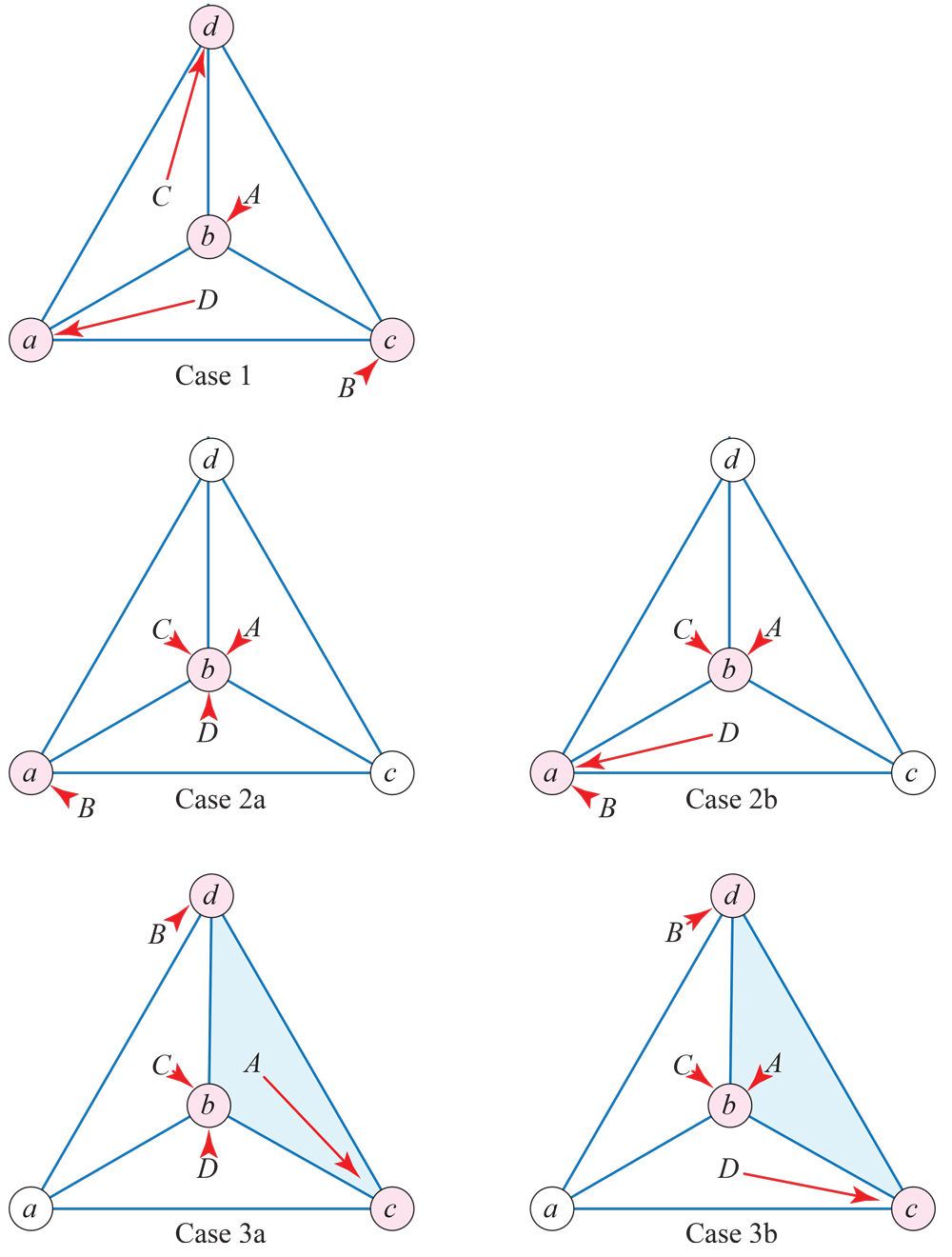}
\caption{Failures. Case~1: at $4$ vertices.
Case~2: at $2$ vertices. 
Case~3: at $3$ vertices.}
\figlab{Cases}
\end{figure}
%%%%%%%%%%%%%%%%%%%%%%%%%%%%%%%%%Figure End

The proof analyzes the $12$ face angles of $T$, and shows
the set of solutions in $(0,1)^{12}$ is empty
(under the convention that each angle is in $(0,1)$).
So we are representing tetrahedra by their $12$ face angles.
The four faces each have a total of $\pi$ angle, which
reduces the dimension of the tetrahedron configuration space from $12$ to $8$.
It is known that in fact the configuration space is $5$-dimensional, 
not $8$-dimensional~\cite{rouyer2012sets},
but the proof to follow works without including the various additional trigonometric relations
that tetrahedron angles must satisfy. It suffices to use linear equalities and inequalities
among the $12$ face angles. 
%That the set of solutions is empty has been
%verified by Linear Programming.

\paragraph{Case 1: $4$ vertices.}
Suppose first that each of the four faces $A,B,C,D$ fail on four
distinct vertices. Then
Lemma~\lemref{curvless1} shows that
$\omega(v) < 1$ for each vertex $v$.
But then $\sum \omega(v) < 4$, contradicting the Gauss-Bonnet theorem.

\paragraph{Case 2a: $2$ vertices, $3+1$.}
Suppose now that the four faces fail on a total of two vertices.
This can occur in two distinct ways: three faces fail on one vertex,
which we call Case~2a, or two faces fail each on two vertices, Case~2b.
Say that $b$ is the vertex at which three faces fail. We then have:

\begin{eqnarray*}
A \;\mathrm{fails} \;\mathrm{at}\; b &:& bC + bD > 1 \\
B \;\mathrm{fails} \;\mathrm{at}\; a &:& aC + aD > 1 \\
C \;\mathrm{fails} \;\mathrm{at}\; b \\
D \;\mathrm{fails} \;\mathrm{at}\; b
\end{eqnarray*}
It turns out that we do not need to use the fact that
$C$ and $D$ fail at some vertices, so the implied inequalities are suppressed.
Summing the failure inequalities above leads to a contradiction:
\begin{eqnarray*}
(aC + bC) + (aD + bD) &>& 2 \\
(1 - dC) + (1 - cD) &>& 2 \\
2 &>& 2 + (dC + cD) \\
0 &>& dC + cD
\end{eqnarray*}
This is a contradiction because all angles have positive measure.

\paragraph{Case 2b: $2$ vertices, $2+2$.}
This follows the exact same proof, as again $C$ and $D$ failures are not needed
to reach a contradiction.

\paragraph{Case 3a: $3$ vertices, double outside.}
The three vertices at which faces fail bound a face, say $A$.
One vertex of $A$, say $b$, is ``doubled" in the sense that two faces fail at $b$.
Case~3a is distinguished in that neither face failing on $b$ is the three-vertex face $A$.
(Swapping $B$ to fail on $c$ and $A$ to fail of $d$
is symmetrically equivalent to the case illustrated.)

We again do not need all failures, in particular, we only need
those for faces $B$ and $D$:
\begin{eqnarray*}
A \;\mathrm{fails} \;\mathrm{at}\; c \\ 
B \;\mathrm{fails} \;\mathrm{at}\; d &:& dA + dC  > 1 \\
C \;\mathrm{fails} \;\mathrm{at}\; b  \\
D \;\mathrm{fails} \;\mathrm{at}\; b &:& bA + bC  > 1 \\
\end{eqnarray*}
\noindent
Adding these inequalities leads to the same contradiction:
\begin{eqnarray*}
(bA + dA) + (bC + dC) &>& 2 \\
(1 - cA) + (1 - aC) &>& 2 \\
0 &>& aC + cA
\end{eqnarray*}
Again a contradiction.

\paragraph{Case 3b: $3$ vertices, double inside.}
In contrast to Case~3a, in this case, one of the faces that fail on $b$ is
the three-vertex face $A$.
(Swapping $B$ to fail on $c$, $D$ to fail on $b$, and $C$ to fail on $d$,
is symmetrically equivalent.) 
This is the only difficult case, and the only case in which the
triangle inequalities guaranteed by Lemma~\lemref{trineq} are needed.

The angles of face $A$ satisfy $bA + cA + dA=1$.
Assume without loss of generality that
$bA \le cA \le dA$.
Three faces, $B,C,D$ fail at the three vertices of face $A$:
$d,b,c$ respectively.

To build intuition, we first run through the proof for specific $A$-face angles:
%$bA = 0.1, cA = 0.3, dA = 0.6$.
%$( bA, cA, dA ) = (0.1, 0.3, 0.6)$.

\begin{eqnarray*}
( bA, cA, dA ) &=& (0.1, 0.3, 0.6)\\
A \;\mathrm{fails} \;\mathrm{at}\; b \\ 
B \;\mathrm{fails} \;\mathrm{at}\; d &:& dA + dC > 1 \;:\; dC  > 0.4 \\
C \;\mathrm{fails} \;\mathrm{at}\; b &:& bA + bD > 1 \;:\; bD > 0.9 \\
D \;\mathrm{fails} \;\mathrm{at}\; c &:& cA + cB > 1 \;:\; cB  > 0.7
\end{eqnarray*}
Note $0.4 + 0.9 + 0.7 = 2$; this holds for arbitrary $A$ angles.
Now apply the triangle inequality to each of $dB, cB, dC$:
\begin{eqnarray*}
bD &<& bA + bC \;:\; bC > bD - bA \;:\; bC > 0.8 \\
cB &<& cA + cD \;:\; cD > cB -  cA \;:\;  cD > 0.4 \\
dC &<& dA + dB \;:\; dB > dC - dA \;:\;  dB > -0.2
\end{eqnarray*}
Note $0.8 + 0.4 -0.2 = 1$; this again holds for arbitrary $A$ angles.

Triangle face $D$ satisfies: $bD + cD + aD =1$.
\begin{eqnarray*}
bD &>& 0.9 \\
cD &>& 0.4 \\
bD + cD &>& 1.3 \\
bD + cD + aD &>& 1.3 \;>\; 1
\end{eqnarray*}
which contradicts $bD + cD + aD =1$.

\medskip
\begin{center}
\noindent\rule{0.5\textwidth}{0.5pt}
\end{center}
\medskip

Without specific angles assigned to $( bA, cA, dA )$, the argument
is less transparent.
Again assume that $bA \le cA \le dA$.

\begin{eqnarray*}
%$bA \le cA \le dA & \\
A \;\mathrm{fails} \;\mathrm{at}\; b \\ 
B \;\mathrm{fails} \;\mathrm{at}\; d &:& dA + dC > 1 \;:\; dC  > 1 - dA  \\
C \;\mathrm{fails} \;\mathrm{at}\; b &:& bA + bD > 1 \;:\; bD > 1 - bA \\
D \;\mathrm{fails} \;\mathrm{at}\; c &:& cA + cB > 1 \;:\; cB  > 1 - cA 
\end{eqnarray*}
Note the sum of the above three right-hand sides is $3 - (dA+bA+cA) = 2$.
\noindent
Now apply the triangle inequality to $dB, cB, dC$:
\begin{eqnarray*}
bD &<& bA + bC \;:\; bC > bD - bA \;:\; bC > 1 - 2 \cdot bA \\
cB &<& cA + cD \;:\; cD > cB -  cA \;:\;  cD > 1 - 2 \cdot cA \\
dC &<& dA + dB \;:\; dB > dC - dA \;:\;  dB > 1 - 2 \cdot dA
\end{eqnarray*}
Note the sum of the above three right-hand sides is $3 - 2(dA+bA+cA) = 1$.
\noindent
Face $D$'s angles satisfy  $bD + cD + aD =1$.
Now we reach a contradiction using the inequalities above.
\begin{eqnarray*}
bD &>& 1 - bA \\
cD &>& 1 - 2 \cdot cA \\
bD + cD &>& 2 - (bA + 2 \cdot cA )
\end{eqnarray*}
We have $(bA + 2 \cdot cA ) \le 1$ because $bA + cA + dA=1$
and $cA \le dA$.
And of course every angle is positive, so $aD > 0$.
So we have
\begin{eqnarray*}
bD + cD &>& 1\\
bD + cD + aD &>& 1
\end{eqnarray*}
which contradicts $bD + cD + aD =1$.

That the inequalities for each of the above cases
cannot be simultaneously satisfied
has been verified by Mathematica's
\texttt{FindInstance[]} function, which
uses Linear Programming over the rationals\footnote{
\url{https://mathematica.stackexchange.com/q/255494/194}}
to conclude that the set of %real solutions in the $12$ variables is empty.
solutions in $\mathbb{R}^{12}$ is empty.

Replacing the triangle inequalities with equalities when the tetrahedron
is flat (e.g., $aB = aC + aD$ instead of $aB < aC + aD$) again leads to the same contradiction.

We have thus established Theorem~\thmref{Q3}.
And together with remarks in Section~\secref{Q0},
Theorems~\thmref{Q1}, \thmref{Q2}, and \thmref{Q3}
establish Theorem~\thmref{Q123}.

%%%%%%%%%%%%%%%%%%%%%%%%%%%%%%%%%%%%%

\section{Tetrahedra with many $Q_{1,2,3}$}
\seclab{F-acute}

As mentioned in the Introduction (Section~\secref{Introduction}),
one cannot expect there to exist more than three simple closed quasigeodesics on a general convex surface,
so one could expect the same fact
also holds for general tetrahedra.

In this section we provide an open subset of the space of tetrahedra, 
each tetrahedron of which has (unexpectedly) many such quasigeodesics.

Let  ${\cal T}$ be the space of all tetrahedra in $\Rs$,
with the topology induced by the usual Pompeiu-Hausdorff metric.
Two polyhedra in ${\cal T}$ are then close to each other if and only if they have close respective vertices.

The goal of this section is to prove the 
theorem previously stated in the Introduction:

\setcounter{thm}{1} %To renumber the thm
\begin{thm}
\thmlab{34again}
There exists an open set ${\cal O}$ of tetrahedra, each element of which has at least $34$ simple closed quasigeodesics.
\end{thm}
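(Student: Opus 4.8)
The plan is to reduce the theorem to a single tetrahedron together with an openness argument: I would exhibit one acute seed tetrahedron $T_0$ carrying at least $34$ simple closed quasigeodesics, each certified by strict inequalities, and then observe that strict inequalities define an open set. For $T_0$ I would take an acute tetrahedron equal or close to a regular one. The reason to sit near the isosceles locus is the wealth of geodesics noted in Section~\secref{Q0}: an isosceles tetrahedron is the quotient of a flat torus by an involution, so it carries infinitely many simple closed geodesics~\cite{protasov2007closed,akopyan2018long}. Bare geodesics ($Q_0$) are fragile---they require the Gauss--Bonnet curvature balance of $2\pi$ to each side---so the real task is to convert this abundance into \emph{vertex-anchored} quasigeodesics, which are robust.

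I would assemble the $34$ from vertex-anchored families. Because $T_0$ is acute, at every vertex any two incident face angles sum to less than $\pi$, so no face fails in the sense of Section~\secref{Q3} and all four face boundaries $\partial A,\partial B,\partial C,\partial D$ are $Q_3$'s, with strict defining inequalities. The remaining thirty I would take to be geodesic loops ($Q_1$, one geodesic arc from a vertex $v$ back to $v$, as in Section~\secref{Q1}) and geodesic digons ($Q_2$, two geodesic arcs joining a vertex pair, the vertex-to-vertex analogue of the edge-loops of Section~\secref{Q2}). The flat-torus structure supplies arbitrarily many geodesic arcs between vertices, hence many of moderate combinatorial length; from these I would select enough ``fat'' loops and digons---those whose two side-angles at each anchored vertex are strictly less than $\pi$---that are simple and pairwise distinct. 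Counting $4$ faces plus at least $30$ loops and digons gives at least $34$.

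The robustness of each such quasigeodesic, and hence openness, rests on one structural observation: a vertex-anchored quasigeodesic is a concatenation of geodesic arcs between \emph{fixed} images of vertices in the appropriate star- or partial unfolding, so it automatically closes up---there is no curvature-balance equation to impose, in contrast to a $Q_0$. What remains are purely open conditions: each arc is the straight segment between two vertex-images and must stay inside the unfolding, the arcs of a digon must not cross, and each of the two side-angles at each anchored vertex must be $<\pi$. All of these vary continuously with the vertex coordinates of $T$, so there is a common neighbourhood $\mathcal{O}\ni T_0$ on which all $34$ quasigeodesics persist. Although $T_0$ itself may be isosceles (a boundary case, with every $\q_v=\pi$), the certifying inequalities are strict, so $\mathcal{O}$ is a genuine open set whose members need not be isosceles.

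The step I expect to be the main obstacle is the interplay between the count and robustness. Near the regular tetrahedron every complete angle satisfies $\q_v\approx\pi$, which is exactly the borderline value for the side-angle condition; a long, ``thin'' loop or digon has inner angle near $0$ and therefore outer angle near $\q_v\approx\pi$, so it is not robust. One must therefore select loops and digons whose inner angles are bounded away from both $0$ and $\q_v$, verify that enough of them are simple and pairwise distinct to reach the precise number $34$, and check that no two of them coincide or degenerate as one leaves $T_0$. The persistence argument itself is then routine, being the same inter-image straight-segment reasoning already used in Sections~\secref{Q1} and~\secref{Q2}.
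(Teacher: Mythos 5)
Your outer scaffolding---pick a seed tetrahedron near the regular one, certify each quasigeodesic by strict inequalities in an unfolding, and invoke continuity of those inequalities to get an open neighborhood---is exactly the paper's strategy (its Lemma~\lemref{Q_on-neighborhood}, plus the openness of f-acuteness in Lemma~\lemref{f-acute_open}). The $4$ face boundaries as $Q_3$'s also match. But the heart of the count, namely producing $30$ vertex-anchored loops and digons from the flat-torus structure, has a genuine gap, and in fact fails as stated at the regular tetrahedron. By the paper's Lemma~\lemref{isosceles}, an isosceles (in particular regular) tetrahedron has \emph{no} $1$-vertex quasigeodesic at all: since every $\theta_v=\pi$, Gauss--Bonnet forces any geodesic loop at a vertex to have angle $0$ on one side, whence it folds back onto a doubled segment through a second vertex. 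So your seed $T_0$ cannot be regular if loops are to contribute, and the abundance of torus geodesics does not convert into robust digons either: the parallel families of simple closed geodesics degenerate at their ends precisely to doubled vertex-to-vertex segments with side angle exactly $\theta_v=\pi$---the borderline case you yourself flag as the ``main obstacle'' but never resolve. You give no construction of $30$ (or indeed any specific number of) simple, pairwise distinct, non-degenerate loops and digons whose side angles are uniformly bounded away from $\pi$, and no argument that long arcs between vertex images stay simple and vertex-free; this is the entire content of the theorem, not a routine selection step.

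The paper closes this gap differently, and more economically. It first perturbs the regular tetrahedron by lowering the apex, so that the three base vertices acquire curvature strictly greater than $\pi$ (Lemma~\lemref{3_large_curvatures_open}); this makes $\theta_v<\pi$ at those vertices, so \emph{any} geodesic loop there is automatically a strict quasigeodesic, and an explicit star-unfolding check yields exactly $6$ such $Q_1$'s (two per base vertex---the loops through the apex fail, having angle $>\pi$ on one side). It then takes only $18$ non-degenerate $Q_2$'s ($3$ edge-loops per edge, inherited from the regular tetrahedron via Lemma~\lemref{Q_on-neighborhood}), plus $3$ degenerate doubled edges joining pairs of curvature-$>\pi$ vertices. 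Crucially, it supplements these with $3$ four-vertex quasigeodesics $Q_4$ given by the face-pair partitions $AB{:}CD$, $AC{:}BD$, $AD{:}BC$---certified by the very same f-acuteness you already use for the $Q_3$'s (two acute angles sum to less than $\pi$ on each side at each vertex), yet entirely absent from your proposal. The total $6+21+4+3=34$ thus never requires more than $18$ non-degenerate digons, which is roughly where the honest supply near the regular tetrahedron appears to sit; your plan, needing $30$ from loops and digons alone, would have to prove an existence-and-simplicity statement substantially stronger than anything in the paper.
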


We call a tetrahedron \emph{f-acute} if all its faces are acute triangles.

\begin{lem}
\lemlab{f-acute_open}
The set ${\cal F}$ of f-acute tetrahedra is open in $\cal{T}$.
\end{lem}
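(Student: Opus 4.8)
The plan is to realize the f-acute condition as a finite system of \emph{strict} polynomial inequalities in the vertex coordinates, and then invoke continuity together with the elementary fact that a finite intersection of open sets is open.

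First I would use the stated property of the Pompeiu-Hausdorff metric: a tetrahedron $T'$ lies close to $T$ in ${\cal T}$ exactly when its four vertices lie close to the respective vertices of $T$. Thus it suffices to exhibit, for a given f-acute $T$ with vertices $a,b,c,d$, a radius $\e>0$ such that every tetrahedron whose vertices lie within $\e$ of $a,b,c,d$ respectively is again f-acute. This reduces the lemma to an openness statement in the configuration space of vertex positions.

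Next I would encode acuteness algebraically. A triangle $pqr$ is acute if and only if each of its three angles is strictly less than $\pi/2$, which holds if and only if the dot product of the two edge vectors emanating from each vertex is strictly positive:
\[
(q-p)\cdot(r-p) > 0, \quad (p-q)\cdot(r-q) > 0, \quad (p-r)\cdot(q-r) > 0 .
\]
Each left-hand side is a polynomial, hence a continuous function, of the coordinates of $p,q,r$. Applying this to the four faces $A=bdc$, $B=cda$, $C=adb$, $D=abc$ yields twelve such strict inequalities in all, one per incident (vertex, face) pair, each a continuous function of the twelve coordinates of $a,b,c,d$.

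Then I would finish by the openness argument. Since $T$ is f-acute, all twelve polynomials are strictly positive at $(a,b,c,d)$. Each condition of the form $f>0$ with $f$ continuous defines an open subset of the vertex configuration space, and a finite intersection of open sets is open; hence there is an $\e>0$ for which all twelve inequalities persist throughout the $\e$-neighborhood of $(a,b,c,d)$. Transporting this back through the Pompeiu-Hausdorff metric gives an open neighborhood of $T$ in ${\cal T}$ consisting entirely of f-acute tetrahedra, proving the lemma. There is no serious obstacle here; the only points requiring care are that the acuteness conditions be written as \emph{strict} inequalities, so the defining sets are genuinely open rather than merely closed, and that only \emph{finitely many} such conditions occur, so their intersection remains open. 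The dot-product formulation has the further convenience that the twelve functions are polynomials, so continuity is immediate and one never encounters the non-degeneracy subtleties that an $\arccos$ formula for the angles would introduce.
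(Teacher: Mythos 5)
Your proposal is correct and is essentially the paper's own argument: the paper likewise notes that the face-angle conditions depend continuously on the vertex positions (using the paper's stated fact that Pompeiu--Hausdorff closeness of tetrahedra amounts to closeness of respective vertices) and that strict inequalities persist in a neighborhood. Your dot-product encoding $(q-p)\cdot(r-p)>0$ simply makes the paper's continuity claim explicitly polynomial, which is a nice touch of rigor but not a different route.
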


\begin{proof}
The face angles at the vertices of $T$ depend continuously on the vertex positions in $\Rs$.
Once they are $<\pi$, they remain so in a neighborhood. 
\end{proof}

We further restrict our study to a special
open subset ${\cal O}$ of ${\cal F}$, of tetrahedra near a regular tetrahedron, all having three vertices of curvature $> \pi$.
The introduction of these tetrahedra is justified by the considerations in Section~\secref{Q2}.

We start with a regular tetrahedron of apex $a$ 
and horizontal base $bcd$
and move $a$ 
downward
a short distance along a vertical line.
The new tetrahedron $N$ has base vertices of curvatures slightly larger than $\pi$ 
and top vertex $a$ of curvature slightly less than $\pi$. 
Moreover, all faces of $N$ remain acute triangles.
So we consider ${\cal O}$ to be a small neighborhood of $N$.

The next lemma can be proved with an argument similar to 
Lemma~\lemref{f-acute_open}'s proof.

\begin{lem}
\lemlab{3_large_curvatures_open}
All tetrahedra in ${\cal O}$ are f-acute and have three vertices of curvature $> \pi$.
\end{lem}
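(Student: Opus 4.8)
The plan is to treat the lemma as a pure openness (continuity) statement, exactly parallel to the proof of Lemma~\lemref{f-acute_open}. By its construction the tetrahedron $N$ already has three base vertices $b,c,d$ of curvature strictly exceeding $\pi$, an apex $a$ of curvature strictly below $\pi$, and four acute faces; it therefore suffices to show that each of these \emph{strict} properties survives a sufficiently small perturbation of the four vertices, and then to take ${\cal O}$ to be the intersection of the resulting neighborhoods.

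First I would record why $N$ itself has the stated curvatures. Starting from the regular tetrahedron, every vertex has complete angle exactly $\pi$ (three $\tfrac{\pi}{3}$ face angles), hence curvature exactly $\pi$. As the apex descends toward the base plane, the complete angle $\q_a$ increases toward $2\pi$ (in the flat limit the three lateral faces tile the base and wrap fully around $a$), so $\omega_a$ drops below $\pi$; simultaneously each base complete angle $\q_b$ decreases below $\pi$ (its base-face contribution stays $\tfrac{\pi}{3}$ while the two lateral contributions shrink), so $\omega_b,\omega_c,\omega_d$ rise above $\pi$. Thus for a small enough descent $N$ has exactly the claimed configuration. Note that by the first form of Gauss--Bonnet (total curvature $4\pi$) at most three curvatures can exceed $\pi$, so ``three vertices of curvature $>\pi$'' is the extremal case and automatically forces the fourth curvature below $\pi$.

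Next comes the openness argument. The four vertex positions vary continuously in $\Rs$ under the Pompeiu--Hausdorff topology, and each face angle is a continuous function of them, so each complete angle $\q_v$ and each curvature $\omega_v = 2\pi - \q_v$ is continuous. Consequently $\{\omega_b > \pi\}$, $\{\omega_c > \pi\}$, and $\{\omega_d > \pi\}$ are open conditions satisfied by $N$, and their intersection is an open neighborhood of $N$ on which all three base curvatures remain $>\pi$. By Lemma~\lemref{f-acute_open}, f-acuteness is likewise an open condition, so $N$ has an f-acute neighborhood. Taking ${\cal O}$ to be a ball inside the intersection of these neighborhoods gives the lemma.

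The argument is essentially routine; the only point needing care is bookkeeping. Under a small perturbation the four vertices remain in convex position and move continuously, so there is a canonical labelling matching each perturbed vertex to the corresponding vertex of $N$; this is what makes ``the three vertices of curvature $>\pi$'' unambiguous and lets the three open curvature conditions be intersected. The one mildly substantive input is the monotonicity of $\q_a$ and $\q_b$ under the vertical descent used to build $N$, which can be confirmed either by the flat-limit computation sketched above or by differentiating the face angles in the height parameter; everything else is continuity.
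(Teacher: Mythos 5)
Your proposal is correct and takes essentially the same route as the paper, which proves this lemma by the identical continuity/openness argument used for Lemma~\lemref{f-acute_open} (face angles, hence curvatures, depend continuously on vertex positions, so strict inequalities persist in a neighborhood of $N$). Your flat-limit verification that $N$ itself has three base curvatures $>\pi$ and apex curvature $<\pi$ is a harmless elaboration of what the paper simply asserts when constructing $N$.
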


The following is a particular case of Lemma 17.2 in~\cite{Reshaping}.

\begin{lem}
\lemlab{Q_on-neighborhood}
Assume the tetrahedron $T$ has a simple closed quasigeodesic $Q_k$ through $k \geq 1$  vertices, such that
its left and right angles at each of the $k$ vertices are all strictly less than $\pi$.
Then, all tetrahedra sufficiently close to $T$ in ${\cal T}$ have such a quasigeodesic.
\end{lem}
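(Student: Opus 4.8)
The plan is to treat the quasigeodesic $Q_k$ as a \emph{combinatorial} object whose geometric realization varies continuously with the tetrahedron, and then to observe that every condition defining ``simple closed quasigeodesic through exactly $k$ vertices with strict angle bounds'' is an \emph{open} condition in the vertex coordinates. Concretely, write $Q_k$ as a cyclic concatenation of geodesic arcs $\g_1,\dots,\g_k$, where $\g_i$ runs from the crossed vertex $v_i$ to the next crossed vertex $v_{i+1}$ (indices mod $k$) and is a genuine geodesic, hence straight when the surface is cut open. Each $\g_i$ traverses a definite finite sequence of edges of $T$; record this sequence, together with the cyclic list $v_1,\dots,v_k$, as the combinatorial type of $Q_k$. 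Developing the strip of faces that $\g_i$ crosses into the plane turns $\g_i$ into the straight segment joining the planar image of $v_i$ to that of $v_{i+1}$.

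The first step I would carry out is to verify that this development depends continuously on the tetrahedron. Edge lengths are continuous functions of the four vertices in $\Rs$, and face angles are continuous by the law of cosines; laying out the faces of the strip one at a time, each glued to the previous along a shared edge, is a composition of continuously varying planar isometries, so the planar images of $v_i$ and $v_{i+1}$ move continuously as $T$ is perturbed to a nearby $T'$. Hence the straight segment $\g_i'$ between the perturbed images, and with it the arrival and departure directions of the curve at each vertex, vary continuously.

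Next I would check that each defining property of $Q_k$ is preserved because it holds with strict slack on $T$. First, on $T$ the segment $\g_i$ meets each edge of its strip in the \emph{open} edge, so by continuity $\g_i'$ still crosses exactly the same edges in their interiors and is therefore a legitimate geodesic arc on $T'$ from $v_i'$ to $v_{i+1}'$; the concatenation $\g_1'\cup\dots\cup\g_k'$ then closes up automatically into a closed curve $Q_k'$ through the $k$ perturbed vertices. Second, at each crossed vertex the left and right angles satisfy $L_i+R_i=\q_{v_i}$ and vary continuously with the arrival and departure directions; since $L_i<\pi$ and $R_i<\pi$ strictly on $T$, these strict inequalities persist on $T'$, so the quasigeodesic angle criterion holds at every vertex, and trivially at interior points, where $Q_k'$ is straight. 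Third, simplicity and the avoidance of the uncrossed vertices hold on $T$ with positive clearance between non-adjacent pieces and between the arcs and the other vertices; positive clearance is an open condition and so survives small perturbations. Together these show $Q_k'$ is a simple closed quasigeodesic through exactly $k$ vertices satisfying the same strict bounds.

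The main obstacle I anticipate is not the continuity or the openness arguments, which become routine once the setup is fixed, but pinning down the combinatorial type so that the \emph{same} development can be reused for every nearby $T'$: one must argue that the edge-crossing sequence of each arc is genuinely fixed, not merely that \emph{some} development exists, which is exactly what the strict open crossing condition above guarantees. A secondary point worth stating with care is that the two arcs meeting at a crossed vertex split its cone into exactly a left and a right sector whose measures depend continuously on the developed directions; this is where the strict upper bounds $L_i,R_i<\pi$, rather than the nonstrict bounds in the bare definition of a quasigeodesic, are essential, since only strict inequalities are stable under perturbation.
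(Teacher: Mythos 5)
Your proposal is correct in substance, but there is no in-paper proof to compare it against: the paper states that this lemma ``is a particular case of Lemma~17.2 in~\cite{Reshaping}'' and cites it without argument. Your proof---fix the combinatorial type of $Q_k$ (the cyclic vertex list together with each arc's edge-crossing sequence), develop each strip of faces into the plane, note that the developed images of the vertices vary continuously with the tetrahedron's vertex coordinates, and observe that every defining condition (transversal crossings in open edge interiors, strict left/right angle bounds $<\pi$, positive clearance for simplicity and for avoiding the uncrossed vertices) holds with strict slack and so persists---is the standard stability argument, and in effect supplies the proof that the paper outsources to~\cite{Reshaping}.

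One case needs patching before your argument covers the lemma as the paper actually uses it. In Section~\secref{F-acute} the lemma is applied to edge-loops: $Q_2$'s one of whose two arcs is an edge of $T$ itself. Such an arc does not ``meet each edge of its strip in the open edge,'' so your open-transversal-crossing condition does not literally apply to it; relatedly, your positive-clearance simplicity argument degenerates when a side angle at a vertex equals $0$ (which your hypotheses allow, since only $<\pi$ is required, and which occurs for doubled edges). Both are easily repaired. First, establish the dichotomy: in the development both the arc and any edge it meets are straight, and two straight segments sharing more than one point lie on a common line, which would force the arc through an edge endpoint, i.e., a vertex; hence each arc either crosses edges only transversally in their open interiors, or coincides with an entire edge of $T$. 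Second, an edge is a geodesic at its interior points (the angle on each side is exactly $\pi$ within each incident face), so an arc coinciding with an edge of $T$ perturbs tautologically to the corresponding edge of the nearby tetrahedron $T'$, and a folded (doubled) arc persists because its single developed segment does, with the strict angle bound $\theta_{v}<\pi$ at its endpoints surviving by continuity. With that dichotomy added, your proof is complete and matches the intended generality of the statement.
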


%%%%%%%%%%%%%%%%%%%%%%%%%%%%%%%%%Figure Begin
\begin{figure}[htbp]
\centering
\includegraphics[width=0.75\linewidth]{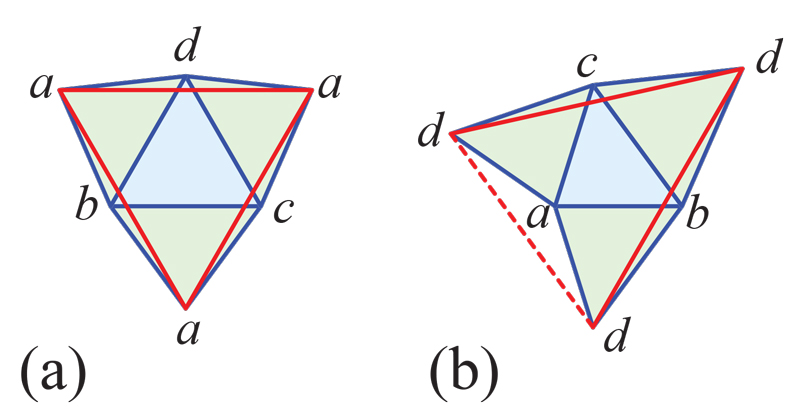}
\caption{Two star unfoldings of a near-regular tetrahedron.
In this example,
$\o_a=142^\circ$ and 
$\o_b=\o_c=\o_d=193^\circ$.
%base vertices each have curvature $193^\circ$.
%$\triangle bcd$ is equilateral, but
%one face angle of $B,C,D$ is $73^\circ$.
Geodesic loops are solid red segments. %in the unfoldings.
In~(a), each loop has angle $12^\circ$ to one side of $a$ and $206^\circ$ to the other side.
In~(b), $\q_d=167^\circ$,
so the loops are quasigeodesics.
}
\figlab{Prolific}
\end{figure}
%%%%%%%%%%%%%%%%%%%%%%%%%%%%%%%%%Figure End
%tetra = {
%  {0, 0, -0.2 + Sqrt[2/3] - 1/(2 Sqrt[6])}, {-(1/(2 Sqrt[3])), -(1/
%    2), -(1/(2 Sqrt[6]))}, {-(1/(2 Sqrt[3])), 1/
%   2, -(1/(2 Sqrt[6]))}, {1/Sqrt[3], 0, -(1/(2 Sqrt[6]))}
%  }

In view of Lemmas \lemref{3_large_curvatures_open} and \lemref{Q_on-neighborhood},
it suffices to count the simple closed quasigeodesics on %the 
our reference tetrahedron $N \in {\cal O}$.
\begin{itemize}
\item We saw that a general tetrahedron has no $Q_0$.

\item There exists at least one $Q_1$ on every tetrahedron. In fact, $N$ has ${\bf 6}$ such quasigeodesics.
To see this, consider the four star-unfoldings of $N$ with respect to its vertices. Because of the symmetry of $N$, three of the unfoldings from
the base vertices $b,c,d$ are isometric.
See Fig.~\figref{Prolific}. One can then check that through each base vertex pass two $Q_1$'s, as represented in Fig.~\figref{Prolific}(b).
On the other hand, the three geodesic loops through apex $a$, represented in Fig.~\figref{Prolific}(a), are not quasigeodesics.

\item Because $N$ is chosen sufficiently close to a regular tetrahedron, 
Lemma~\lemref{Q_on-neighborhood} shows that every edge of $N$ provides three non-degenerate $Q_2$'s,
as in Fig.~\figref{Q2_alt_reg_3D}. The three vertices of curvatures $> \pi$ provide three more degenerate $Q_2$'s.
They all sum up to ${\bf 21}$ $Q_2$'s.

\item The boundary of every face of $N$ is a $Q_3$, because $N$ is f-acute, hence there are ${\bf 4}$ such $Q_3$ quasigeodesics.

\item Every partition of the face set of $N$ 
into two faces
provides a $Q_4$, again because $N$ is f-acute, hence there are ${\bf 3}$ $Q_4$'s, namely corresponding to 
$AB:CD$, $AC:BD$, $AD:BC$.
\end{itemize}

Thus we have found a tetrahedron $N$ in whose neighborhood $\cal{O}$,
every tetrahedron has at least $34$ quasigeodesics, verifying Theorem~\thmref{34}.

%%%%%%%%%%%%%%%%%%%%%%%%%%%%%%%%%Figure Begin
\begin{figure}[htbp]
\centering
\includegraphics[width=0.75\linewidth]{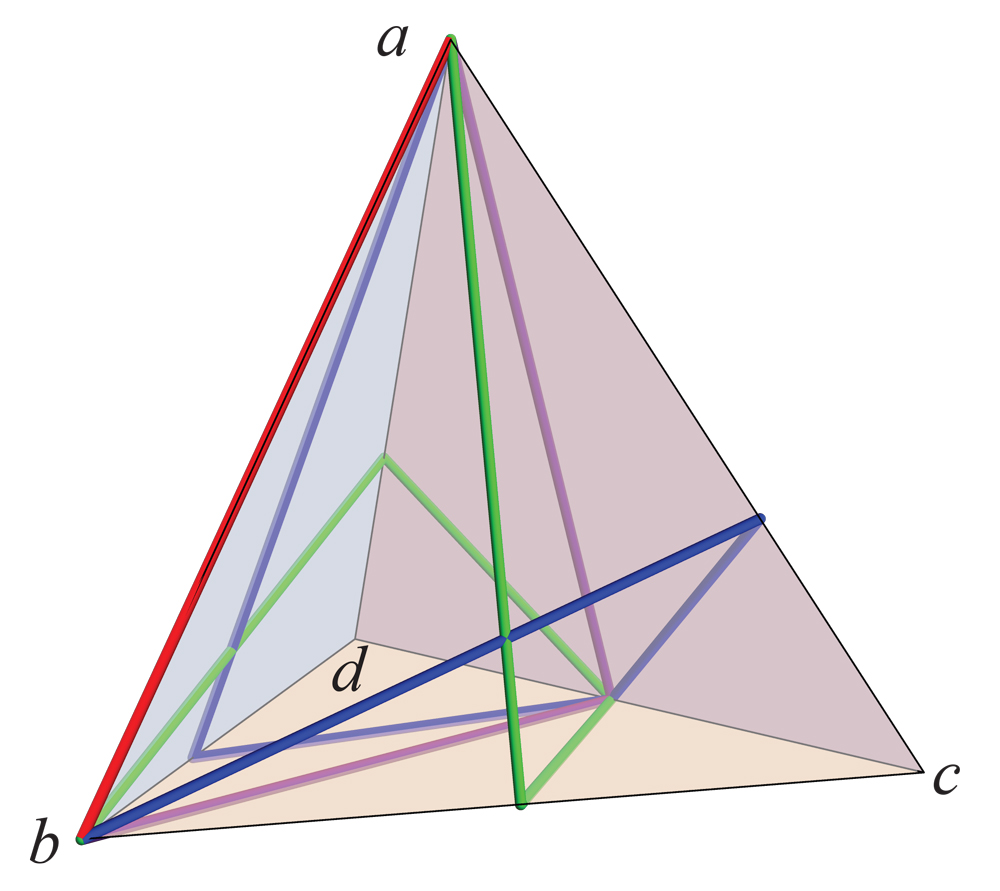}
\caption{Three non-degenerate and one degenerate (red) $Q_2$
on a regular tetrahedron.
The blue, green, and purple geodesic segments each connect to $ab$.}
\figlab{Q2_alt_reg_3D}
\end{figure}
%%%%%%%%%%%%%%%%%%%%%%%%%%%%%%%%%Figure End

%%%%%%%%%%%%%%%%%%%%%%%%%%%%%%%%%%%%%%%%%%%%%%%%%%%%%%%%%%%%%%%%%%%%%%

\section{Remarks and Open Problems}
\seclab{Open}

Our work leaves open several questions of various natures.

\bs

\textbf{Open Problem~1.} The $2$-vertex quasigeodesics that we identified in
Section~\secref{Q2} are all edge-loops, i.e., they contain the edge joining the respective vertices.
Is this necessarily the case?
\bs

According to Theorem~\thmref{34}, some tetrahedra have at least $34$ simple closed quasigeodesics,
and this happens on an open subset of the space ${\cal T}$ of tetrahedra.

\bs

\textbf{Open Problem~2.} 
Does there exist an upper bound on the number of simple closed quasigeodesics a tetrahedron can have?
Of course, this is not the case for pure simple closed geodesics, see Section~\secref{Q0}.

\bs

\textbf{Open Problem~3.}  Find examples of tetrahedra with $k \geq 3$ simple closed quasigeodesics, for as many values of $k$ as possible.
For example, is there any tetrahedron that has only the $k=3$ simple closed quasigeodesics that
Pogorelov guarantees and we describe in Theorem~\thmref{Q123}?
Such a tetrahedron would be a polyhedral counterpart of an ellipsoid.
\bs

Our quest for simple closed quasigeodesics on tetrahedra lead us to investigate the acuteness of face angles incident to a vertex.
In this direction, we established the next elementary result, of some independent interest.

\begin{prop}
\lemlab{face_angles_longest_edge}
Let $\bar{e}$ be a longest edge of the tetrahedron $T$. Then at least one extremity of $\bar{e}$ has all incident face angles acute.
\end{prop}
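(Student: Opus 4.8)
The plan is to show that one endpoint of the longest edge has all three incident face angles acute, by first disposing of the four face angles that lie in the two faces sharing $\bar e$, and then handling the two remaining angles with a short metric argument.

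Assume without loss of generality that $\bar e = ab$. The edge $ab$ is shared by exactly the two faces $C=adb$ and $D=abc$, and in these it is opposite $d$ and $c$ respectively. Since $ab$ is a longest edge of $T$, it is in particular a longest side of each of the triangles $C$ and $D$; hence the angle opposite it (namely $dC$ in face $C$, and $cD$ in face $D$) is a largest angle of that triangle. As a nondegenerate triangle has at most one angle $\ge \pi/2$, the two angles at $a$ and $b$ in each of these faces cannot reach $\pi/2$: if, say, $aC \ge \pi/2$, then $dC \ge aC \ge \pi/2$ would give two angles summing to at least $\pi$, a contradiction. Thus $aC, aD, bC, bD$ are all acute. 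The only face angle at $a$ not yet controlled is $aB$ (the angle of face $B=acd$ at $a$), and the only one at $b$ is $bA$ (the angle of face $A=bcd$ at $b$); both are the angle opposite the common edge $cd$ in their respective triangles. It therefore suffices to prove that $aB$ and $bA$ cannot both be $\ge \pi/2$: the vertex whose remaining angle is then $< \pi/2$ has all three incident face angles acute.

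For this final step---the crux---I would use Thales' characterization: $\angle cad = aB \ge \pi/2$ holds exactly when $a$ lies in the closed ball with diameter $cd$, and likewise $bA \ge \pi/2$ forces $b$ into the same ball. Let $m$ be the midpoint of $cd$, so this ball has center $m$ and radius $|cd|/2$. If both angles were $\ge \pi/2$, we would get $|am| \le |cd|/2$ and $|bm| \le |cd|/2$, whence $|ab| \le |am|+|mb| \le |cd|$. But $ab$ is a longest edge, so $|ab| \ge |cd|$, forcing equality throughout: $a$ and $b$ lie on the sphere, and $m$ lies on segment $ab$ as its midpoint. Then $ab$ and $cd$ bisect each other at $m$, so $a,c,b,d$ form a parallelogram and are coplanar, i.e. $T$ is flat. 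Excluding this degenerate case, at least one of $aB, bA$ is strictly acute, completing the proof.

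The main obstacle is this last paragraph; the first two steps are essentially the observation that the two angles adjacent to the longest side of a triangle are automatically acute. I expect the only delicacy in the crux step to be the bookkeeping of the equality case---ties among edge lengths, and points on the boundary of the ball---all of which collapse to the parallelogram/coplanarity degeneracy and are ruled out for a genuine tetrahedron.
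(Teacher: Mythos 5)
Your proof is correct, and its skeleton matches the paper's: both arguments first observe that the four angles at $a$ and $b$ in the two faces containing $\bar e = ab$ are acute because $ab$ is a longest side of each of those triangles, and both then rule out $aB$ and $bA$ simultaneously failing to be acute via Thales' theorem applied to the segment $cd$. Where you diverge is in how that second step is implemented. The paper unfolds the union of faces $A \cup B$ about the edge $cd$ into a planar quadrilateral $a'b'c'd'$, invokes the fact that unfolding does not decrease the distance between $a$ and $b$ (so $|a'b'| \ge |ab|$), and applies the planar Thales disk of diameter $cd$ to get $|a'b'| \le |cd|$, yielding the chain $|a'b'| \le |cd| \le |ab| \le |a'b'|$. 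You instead stay in $\mathbb{R}^3$: the closed ball with diameter $cd$ together with the triangle inequality through its center $m$ gives $|ab| \le |am| + |mb| \le |cd|$ directly, with no unfolding lemma needed --- a slightly more elementary and self-contained route to the same contradiction. Both arguments collapse the equality case to the same degeneracy ($ab$ and $cd$ equal and bisecting each other, i.e., a doubly covered rectangle). Note that your choice to exclude this flat case is in fact the more careful reading: there the critical angles $aB$ and $bA$ equal exactly $\pi/2$, so the conclusion as literally stated (all incident face angles acute) fails, whereas the paper asserts it ``holds'' for the doubly covered rectangle. (Incidentally, the paper's failure hypotheses ``$aB \geq \pi$, $bA \geq \pi$'' are typos for $\geq \pi/2$, which is the reading your version correctly uses.)
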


Also notice that the face angles incident to a vertex $v$ of $\omega_v > \pi$ are all acute, directly from Lemma~\lemref{trineq}.

\begin{proof}
Assume $T=abcd$ and $\bar{e}=ab$. Then $ab$ is in particular longest edge in the triangle faces $C$ and $D$, hence the angles $aC, aD, bC, bD$ are all acute.

Assume now that the statement doesn't hold, hence $aB \geq \pi$, $bA \geq \pi$.
Unfold the union of faces $A \cup B$ in the plane, to a quadrilateral $a'b'c'd'$.
Clearly, the triangles $a'c'd'$ and $acd$ are congruent, as are $b'c'd'$ and $bcd$. However, $|a'b'| \geq |ab|$.
The angle conditions $aB \geq \pi$, $bA \geq \pi$ imply, via an elementary geometry result, that the points $a'$ and $b'$ lie on, or in the interior of, the circle of diameter $cd$.
Therefore, we get $|a'b'| \leq |cd| \leq |ab| \leq |a'b'|$, impossible unless we have equalities everywhere.
In this case, $T$ is a doubly covered rectangle, and the conclusion holds. % true.
\end{proof}

Our proofs involve the vertex of $T$ of largest curvature.

\bs

\textbf{Open Problem~4.} 
Is the longest edge of a tetrahedron always incident to the vertex of largest curvature?
This is indeed the case for degenerate tetrahedra, which correspond to planar quadrilaterals.

%%%%%%%%%%%%%%%%%%%%%%%%%%%%%%%%%%%%%%%%%%%%%%%%%%

\bibliographystyle{alpha}
\bibliography{refs}

\end{document}